\apptocmd{\sloppy}{\hbadness 10000\relax}{}{}
\declaretheorem[name=Theorem,numberwithin=section]{thm}
\declaretheorem[name=Remark,style=remark,sibling=thm]{rem}
\declaretheorem[name=Lemma,sibling=thm]{lemma}
\declaretheorem[name=Definition,style=definition,sibling=thm]{defn}
\declaretheorem[name=Corollary,sibling=thm]{cor}
\declaretheorem[name=Assumption,style=definition,sibling=thm]{assum}
\declaretheorem[name=Theorem,numbered=no]{theorem}
\declaretheorem[style=remark,name=Remark,numbered=no]{remark}
\numberwithin{equation}{section}
\crefname{lemma}{Lemma}{Lemmata}
\crefname{prop}{Proposition}{Propositions}
\crefname{thm}{Theorem}{Theorems}
\crefname{cor}{Corollary}{Corollaries}
\crefname{defn}{Definition}{Definitions}
\crefname{example}{Example}{Examples}
\crefname{rem}{Remark}{Remarks}
\crefname{assum}{Assumption}{Assumptions}
\crefname{nota}{Notation}{Notation}
\newcommand{\ti}{\tilde}
\newcommand{\wt}{\widetilde}
\newcommand{\cn}{\colon}
\newcommand{\sub}{\subset}
\newcommand{\ov}{\overline}
\newcommand{\bbN}{\mathbb{N}}
\newcommand{\bbR}{\mathbb{R}}
\newcommand{\bbS}{\mathbb{S}}
\newcommand{\bbH}{\mathbb{H}}
\newcommand{\bbE}{\mathbb{E}}
\newcommand{\8}{\infty}
\newcommand{\al}{\alpha}
\newcommand{\g}{\gamma}
\newcommand{\de}{\delta}
\newcommand{\e}{\varepsilon}
\newcommand{\ka}{\kappa}
\newcommand{\la}{\lambda}
\newcommand{\s}{\sigma}
\newcommand{\Si}{\Sigma}
\newcommand{\p}{\varphi}
\newcommand{\vt}{\vartheta}
\newcommand{\Om}{\Omega}
\newcommand{\G}{\Gamma}
\newcommand{\La}{\Lambda}
\newcommand{\cL}{\mathcal{L}}
\newcommand{\cW}{\mathcal{W}}
\newcommand{\del}{\partial}
\newcommand{\n}{\nabla}
\newcommand{\fa}{\forall}
\newcommand{\ip}[2]{\left\langle #1,#2 \right\rangle}
\newcommand{\fr}[2]{\frac{#1}{#2}}
\newcommand{\x}{\times}
\DeclareMathOperator{\Rm}{Rm}
\DeclareMathOperator{\sgn}{sgn}
\newcommand{\pf}[1]{\begin{proof}#1 \end{proof}}
\newcommand{\eq}[1]{\begin{equation}\begin{alignedat}{2} #1 \end{alignedat}\end{equation}}
\newcommand{\br}[1]{\left(#1\right)}
\newcommand{\abs}[1]{\lvert #1\rvert}
\newcommand{\enum}[1]{\begin{enumerate}[(i)] #1 \end{enumerate}}
\newcommand{\Ra}{\Rightarrow}
\newcommand{\ra}{\rightarrow}
\newcommand{\hra}{\hookrightarrow}
\newcommand{\mt}{\mapsto}
\newcommand{\hp}{\hphantom}
\begin{document}

\title{Parabolic approaches to curvature equations}

\author[P. Bryan, M. N. Ivaki, J. Scheuer]{Paul Bryan, Mohammad N. Ivaki, Julian Scheuer}

\date{\today}
\dedicatory{}
\subjclass[2010]{}
\keywords{Curvature measure problems; Dual Minkowski problem; Christoffel-Minkowski problem; Curvature flow}
\begin{abstract}
We employ curvature flows without global terms to seek strictly convex, spacelike solutions of a broad class of elliptic prescribed curvature equations in the simply connected Riemannian spaceforms and the Lorentzian de Sitter space, where the prescribed function may depend on the position and the normal vector. In particular, in the Euclidean space we solve a class of prescribed curvature measure problems, intermediate $L_p$-Aleksandrov and dual Minkowski problems as well as their counterparts, namely the $L_{p}$-Christoffel-Minkowski type problems. In some cases we do not impose any condition on the anisotropy except positivity, and in the remaining cases our condition resembles the constant rank theorem/convexity principle due to Caffarelli-Guan-Ma (Commun. Pure Appl.
Math. 60 (2007), 1769--1791). Our approach does not rely on monotone entropy functionals and it is suitable to treat curvature problems that do not possess variational structures.
\end{abstract}

\maketitle
\tableofcontents
\section{Introduction}
Let $n\in \bbN$ and $N$ be either the Euclidean space $\bbE^{n+1}$, the sphere $\bbS^{n+1}$, the hyperbolic space $\bbH^{n+1}$ with respective sectional curvature $0,1,-1$ or the $(n+1)$-dimensional Lorentzian de Sitter space $\bbS^{n,1}$ with sectional curvature $1$. It may also be possible to treat Lorentzian spaceforms of non-positive sectional curvature with similar methods but we do not address that question here since some modifications are needed.
In this paper, we solve equations of prescribed curvature in open subsets $\Om\sub N$, i.e., we seek strictly convex, spacelike hypersurfaces $\Si\sub \Om$ which satisfy
 \eq{\label{Intro}F(\ka_{1},\dots,\ka_{n})=f(x,\nu).}
Here $F$ is a strictly monotone symmetric function of the principal curvatures $\ka_{i}$ of $\Si$. Further conditions on $F$ will be imposed later. Moreover, we assume
\eq{f\in C^{\8}(\bar\Om\x\ti N),}
where $\ti N$ denotes the {\it{dual manifold}} of $N$, i.e.,
\eq{\ti \bbS^{n+1}=\bbS^{n+1},\quad \ti \bbH^{n+1}=\bbS^{n,1},\quad \ti \bbS^{n,1}=\bbH^{n+1}.}
The normal $\nu$ to a hypersurface in $N$ is then understood to be a point in $\ti N$. Here we view all these spaces as hyperquadrics in $\bbR^{n+2}_{\mu}$, where $\mu=\pm 1$ and for $\mu=1$ we mean the Euclidean space and for $\mu=-1$ the Minkowski space of dimension $n+2$ respectively.

In case $N=\bbE^{n+1}$, the normal vector to a hypersurface naturally lives in the unit $n$-sphere and hence in this case we let
\eq{f\in C^{\8}(\bar\Om\x \bbS^{n}),\quad \ti \bbE^{n+1}=\bbS^{n}.}

Equation \eqref{Intro} contains an interesting subclass, the so-called {\it{$L_{p}$-Minkowski problems}} and more general variants; see \Cref{subsec:CMP} for a detailed description. Recently an interesting branch of research has been developed which attempts to solve these equations in the smooth category with the help of curvature flows; e.g., \cite{Andrews:/1998,Andrews:09/2000,BIS3,ChouWang:11/2000,Ivaki:02/2019,LiShengWang:/2020}. To our knowledge, all of those treatments relied on the existence of entropy functionals: the curvature flows were set up as either purely expanding or contracting; therefore, the renormalization contained a \emph{global} term which is manageable whenever a monotone entropy is available. This limits the class of curvature problems that could be treated with a parabolic approach. On the other hand, employing curvature flows without global terms to show the existence of strictly convex solutions to equation \eqref{Intro} in Euclidean and non-Euclidean ambient spaces has been investigated only for a fairly restricted class of curvature functions; see e.g., \cite{Gerhardt:/1996,Gerhardt:01/1996,Gerhardt:/2000,Gerhardt:09/2000,Gerhardt:/2003}.

The aim of this paper is twofold. First, by imposing some mild structural assumptions on $f$, we prove new existence results of strictly convex solutions  to \eqref{Intro} for a large class of curvature functions $F$. These mild conditions are that either $f$ satisfies some convexity assumption with respect to $x$, or with regards to the $L_{p}$-Minkowski type problems that $f$ depends on $x$ only through the support function $s$,
\eq{f=f(s,\nu).}
Second, we follow the recent developments in the theory of smooth Alexandrov-Fenchel type inequalities, where on several occasions it has proven useful to replace the global terms in the classical flows by local terms, cf., \cite{GuanLi:/2015,GuanLiWang:/2019,ScheuerWangXia:11/2018,ScheuerXia:11/2019}. The novel feature here is that this approach does not rely on monotone energy functionals hence allowing us to solve a larger class of curvature problems that do not enjoy variational structures. This technique provides a new flow approach to further important problems arising in convex geometry such as the $L_{p}$-Christoffel-Minkowski problems for a certain range of $p$.

One may treat \eqref{Intro} by considering a parabolic flow of spacelike hypersurfaces such as
\eq{\label{Contr}\dot{x}=\s(f-F)\nu,}
where $x$ is the time-dependent embedding vector, $\nu$ is the future directed (outward) unit normal (we will only consider cases where such a choice is possible) and where
\eq{\s:=\ip{\nu}{\nu}}
is the signature of the $(n+1)$-dimensional respective ambient space.
If $F>0$, a flow of the form \eqref{Contr} can be considered as a {\it{contracting type}}, because it is a lower order perturbation of contracting flows such as the mean curvature flow. In contrast, one could also consider
\eq{\label{Exp}\dot{x}=\s\br{\fr{1}{F}-\fr{1}{f}}\nu,}
which may be considered as a flow of {\it{expanding type}}.

Depending on the type of elliptic problem \eqref{Intro}, it is favorable to use either \eqref{Contr} or \eqref{Exp}. As we will use both versions, we will derive the evolution equations for the general form
\eq{\label{x}\dot{x}=\s\br{\Phi(f)-\Phi(F)}\nu,}
where $\Phi\cn (0,\8)\ra \bbR$
is any nowhere vanishing function with $\Phi'>0$. For the contracting and expanding types described above, $\Phi(y)=y$ and $\Phi(y)=-y^{-1}$ respectively. Whenever the convergence of \eqref{x} to a steady state is shown, we have a solution to \eqref{Intro}. We prove results of the type:

\begin{theorem}
Let $F$ be a degree one homogeneous curvature function and $f = f(x, s, \nu)$ be a positive smooth function of the position, support function and unit normal. Then under certain structural (concavity/convexity) assumptions, the flow \eqref{x} exists for all time and smoothly converges to a solution of the prescribed curvature problem \eqref{Intro}.
\end{theorem}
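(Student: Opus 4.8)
The plan is the standard parabolic scheme---short-time existence, a priori estimates, long-time existence, convergence---the only genuinely new ingredient being that the last step must be carried out without a monotone entropy functional. First I would express the evolving hypersurfaces $\Si_t$ by a single scalar function: in the $L_p$-Minkowski-type situation, the support function $s$ on the dual manifold $\ti N$ (so that the hypothesis $f=f(x,s,\nu)$ is natural), and otherwise a radial or graph function over a fixed spaceform slice. In either parametrization \eqref{x} becomes a fully nonlinear scalar equation $\del_t u=G(\n^2u,\n u,u,\cdot)$ which, because $\Phi'>0$ and $F$ is strictly monotone in the $\ka_i$ on the open cone of strictly convex spacelike hypersurfaces, is uniformly parabolic near the initial data; hence short-time existence and uniqueness on a maximal interval $[0,T^*)$ follow from standard theory. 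Here I would also record, once and for all, the evolution equations induced by \eqref{x} for the metric, the normal, the support function, the speed, and the Weingarten operator.

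A priori estimates form the technical heart. The $C^0$ bound---equivalently, trapping $\Si_t$ between two fixed convex spacelike hypersurfaces---comes from the maximum principle applied to $u$, and it is here that the monotonicity or growth of $f$ in the $s$-slot produces workable static barriers. The $C^1$ bound (in the de Sitter case, uniform spacelikeness, i.e.\ control of the relevant gradient quantity away from the light cone) again follows from the maximum principle using the $C^0$ bound. The $C^2$ bound splits in two. An \emph{upper} bound on the principal curvatures is obtained by applying the maximum principle to the largest principal curvature (equivalently, to a suitable symmetric polynomial of the $\ka_i$ such as $\tr$ of a power of the Weingarten operator), the terms involving two derivatives of $f$ being precisely what the concavity/convexity hypothesis on $f$ is designed to absorb. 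The \emph{lower} bound---preservation of strict convexity---is the crucial and most delicate point: I would either run a maximum principle on the smallest principal curvature, or invoke a constant-rank/convexity-principle argument in the spirit of Caffarelli--Guan--Ma, and this is exactly the place where the structural condition on $f$ is indispensable, since without it strict convexity need not persist.

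Once the $\ka_i$ are pinched between two positive constants, $G$ is uniformly parabolic and the relevant operator (one of $F$, $\log F$, or $-1/F$, according to the choice of $\Phi$) is concave, so Evans--Krylov gives a uniform $C^{2,\al}$ estimate and Schauder bootstrapping then yields uniform-in-time $C^\8$ estimates; consequently the flow cannot leave the class of smooth strictly convex spacelike hypersurfaces in finite time, so $T^*=\8$. For convergence, starting from a suitable sub- or super-solution---whose existence is part of the $C^0$ barrier construction---the initial speed $w:=\Phi(f)-\Phi(F)$ has a definite sign, which is preserved because $w$ satisfies a parabolic equation; hence $u$ is monotone in $t$ and, being uniformly bounded, converges pointwise, the uniform $C^\8$ bounds upgrading this to smooth convergence with $\del_t u\to 0$, so that the limit solves $F=f$, i.e.\ \eqref{Intro}. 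Alternatively, without an ordered sub/super-solution, one shows via the maximum principle that $\osc_{\Si_t}w$ is non-increasing up to terms controlled by the earlier estimates and the structural assumption, hence tends to a limit; Arzel\`a--Ascoli then extracts a smooth limit hypersurface along $t\to\8$ which, by the oscillation decay, is stationary, and a linearization there (using strict convexity and the strict monotonicity of $F$) promotes subsequential to full exponential convergence and pins down the limit uniquely.

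The two genuine obstacles are therefore the lower $C^2$ estimate---the preservation of strict convexity, which is a real constant-rank phenomenon and fails for general $f$, so that the ``structural assumptions'' in the statement are exactly what makes the maximum-principle/convexity-principle argument go through---and, to a lesser extent, closing the entropy-free convergence argument, where the $x$-dependence of $f$ must be absorbed by the same structural hypotheses.
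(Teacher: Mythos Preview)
Your overall scheme---short-time existence, barriers for $C^0$, convexity for $C^1$, curvature pinching, Krylov--Safonov, monotone convergence from a barrier---matches the paper's architecture, and your convergence argument (start from a barrier so the speed has a sign, which is preserved, hence the radial function is monotone and bounded) is exactly what the paper does. Two substantive points, however, diverge.

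First, the order and mechanism of the $C^2$ estimates are reversed. The paper obtains the \emph{lower} curvature bound first, and not by a constant-rank argument: it applies the maximum principle directly to $B=\tr(h^{-1})=\sum\ka_i^{-1}$. Inverse concavity of $F$ supplies the gradient-absorbing inequality $(F^{kl,pq}+2F^{kp}b^{lq})\eta_{kl}\eta_{pq}\geq \tfrac{2}{F}(F^{kl}\eta_{kl})^2$, while the strict hypothesis $\bar D^2_{xx}\phi+K_N\phi\bar g<0$ (with $\phi=-1/f$) produces a strictly negative term $\sum_r(\phi_{xx}+K_N\phi\bar g)(x_{;r}/\ka_r,x_{;r}/\ka_r)$ that dominates the right-hand side of $\cL B$ for large $B$. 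The upper bound on $\ka_n$ is then a \emph{consequence} of the lower bound, the speed bound $F\leq f$, and the extra assumption $F_{\ast|\del\G_+}=0$, via $C\geq F/\ka_1=1/F_\ast(\ka_1/\ka_n,\dots,1)$. No constant-rank theorem enters; the paper remarks that this is a deliberate contrast with Guan--Ren--Wang, trading their weak convexity plus constant-rank for a strict inequality and a direct maximum principle.

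Second---and this is a genuine gap---your Evans--Krylov step does not go through as written. You assert that ``one of $F$, $\log F$, or $-1/F$ is concave,'' but under the paper's standing assumption (in $\bbE^{n+1}$ and $\bbH^{n+1}$) $F$ is only \emph{inverse} concave, and none of $F$, $\log F$, $-1/F$ need be concave as functions of the principal curvatures (equivalently, of the second derivatives of the radial graph function). The paper works around this by reparametrizing: in $\bbE^{n+1}$ it switches to the Gauss-map parametrization, where the support function satisfies $\dot s=F_\ast(\hat\n^2 s+s\hat g)+\phi$ and $F_\ast$ \emph{is} concave, so Krylov--Safonov applies; in the non-flat spaceforms it invokes the dual-flow device. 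Without this switch the higher-regularity bootstrap is unjustified.
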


The results include but are not limited to the Minkowski problem, $L_p$-Minkowski problem, dual Minkowski problem, the $L_p$-Christoffel-Minkowski problem and generalizations thereof. Full details of the assumptions and precise results are described below in \Cref{sec:results}. Some further remarks are in order describing the methods.

When studying problems of prescribed curvature and convex hypersurface flows, it is common to reduce the problem to a problem in terms of the support function (see e.g., \cite{Urbas:/1991}) in which the curvature function $F$ is rewritten in terms of the principal radii $r_i = \kappa_i^{-1}$. Such a reformulation naturally leads to the condition of inverse concavity (see \Cref{F1} and \Cref{F2} below). Compare with prior work on prescribed curvature equations (e.g., \cite{Gerhardt:02/1997,Gerhardt:/2000}) where the speed is assumed log-concave leading to upper curvature bounds. With the additional assumption that $F$ vanishes on the boundary $\partial\G_{+}$ of the convex cone of principal curvatures, lower curvature bounds are obtained. In our situation, we obtain lower curvature bounds using inverse concavity and obtain upper bounds by requiring the inverse function $F_{\ast}$ vanishes on the boundary of the convex cone as in \Cref{F1} and \Cref{Flow-Main} below. When formulated in terms of the support function, the inverse concavity condition allows us to then apply Krylov-Safonov estimates for higher regularity in Euclidean space whilst in other cases we employ duality (see the proof of \Cref{Flow-Main}).

One complication here is that in order to obtain lower bounds, the prescribed function $f$ is required to satisfy strict convexity properties in the position $x$, similar to weak convexity assumptions of \cite{CaffarelliGuanMa:12/2007,GuanRenWang:/2013}. Although we require the stronger condition of strict convexity, we do not require any convexity assumption on the $x$-dependence of $f$ via the support function. We also allow arbitrary anisotropy in $\nu$. This freedom allows us to broaden the scope to include problems of $L_p$-Minkowski type in \Cref{subsec:CMP} and \Cref{subsec:christoffel-minkowski}.

We also investigate dropping the assumption that the inverse function $F_{\ast}$ vanishes on $\partial \G_{+}$. This allows us to treat a broader class of flows such as those with speed $\left(\sigma_{\ell}/\sigma_k\right)^{1/(\ell-k)}$, where $\sigma_{k}$ is the $k$-th elementary symmetric polynomial of the principal curvatures, but the upper curvature estimates no longer follow automatically. Also, in view of the counterexamples of \cite[Thm.~1.2]{GuanRenWang:/2013} and \Cref{rem:firey}, some convexity-type restrictions are required on the prescribed function $f$ and we describe a natural such condition in \Cref{subsec:christoffel-minkowski} as arising in the $L_{p}$-Christoffel-Minkowski problem.

\section{Definitions and notation}

For brevity we rewrite \eqref{x} as
\eq{\label{Flow}\dot{x}=\s(\phi-\Phi)\nu,}
where
\eq{\phi=\Phi(f),\quad \Phi=\Phi(F).}

\begin{defn}\label{N} We fix the following conventions.
\enum{
\item
$n\in \bbN$ and $N$ denotes either $\bbE^{n+1}$, $\bbS^{n+1}$, $\bbH^{n+1}$ or $\bbS^{n,1}$, while the dual manifolds $\ti N$ are described in the introduction. The sectional curvature of $N$ is denoted by $K_{N}$. The signature of these spaces is denoted by $\s$.
\item The spaces described in item (i) with $K_{N}\neq 0$ all naturally arise as hyperquadrics in an $(n+2)$-dimensional Euclidean or Minkowski space respectively, whose signature is $\mu\in \{\pm 1\}$ throughout this paper.
\item We write $\ti g, \ti D$ for the metric and the corresponding Levi-Civita connection of $\ti N$, while $\bar{D}$ denotes the Levi-Civita connection of the metric $\bar g$ of $N$.
\item For $\Om\sub N$, on $\Om \x \ti N$ we use the product metric and its Levi-Civita connection.
\item The induced metric on $\Si$ is denoted by $g$ and its Levi-Civita connection is $\n$.
For its (and every other metric's) Riemann tensor we use the conventions
\eq{\Rm(X,Y)Z=\n_{X}\n_{Y}Z-\n_{Y}\n_{X}Z-\n_{[X,Y]}Z}
and
\eq{\Rm(X,Y,Z,W)=g(\Rm(X,Y)Z,W).}
\item We occasionally use a local coordinate frame $(\del_{i})_{1\leq i\leq n}$ to calculate tensor expressions. We denote
\eq{R_{ijkl}=\Rm(\del_{i},\del_{j},\del_{k},\del_{l}).}
 For covariant derivatives along $\Si$, in order to facilitate the notation we use semi-colons to denote covariant derivatives, e.g., the components of the second derivative $\n^{2}T$ of a tensor are denoted by
\eq{T_{;ij}:=\n_{\del_{j}}\n_{\del_{i}}T-\n_{\n_{\del_{j}\del_{i}}}T.}
\item We use the following Gaussian formula:
\eq{\bar D_{X}Y=\n_{X}Y-\sigma h(X,Y)\nu,}
where $\nu$ will consistently be chosen such that
\eq{\s\bar g(\nu,\del_{r})>0.}
Here $r$ is the radial coordinate in geodesic normal coordinates, see \Cref{Om}.
The Weingarten operator $A$ is defined by
\eq{g(A(X),Y)=h(X,Y)} and its eigenvalues, the principal curvatures of the hypersurface, are ordered as:
\eq{\ka_{1}\leq\dots\leq\ka_{n}.}
\item We write
\eq{\bbS^{n+1}_{+}&=\{x\in \bbR^{n+2}\cn \abs{x}_{+}=1, x^{n+2}>0\},\\
 \bbS^{n,1}_{+}&=\{x\in \bbR^{n+2}\cn \abs{x}_{-}=1, x^{n+2}>0 \},}
where $\abs{x}_{+}^{2}=\ip{x}{x}_{+}$ and $\abs{x}_{-}^{2}=\ip{x}{x}_{-}$ are the Euclidean resp, Minkowskian norms and dot products of signature $(1,\dots,1,\mu)$.
}
\end{defn}

Also recall that for a spacelike hypersurface of $N$ the Gauss equation is given by
\eq{R_{ijkl}=\s(h_{il}h_{jk}-h_{ik}h_{jl})+\ov\Rm(x_{;i},x_{;j},x_{;k},x_{;l}),}
cf., \cite[Thm.~8.4]{Lee:/1997} for the Riemannian case. The Lorentzian case differs from this only in $\bar g(\nu,\nu)$.

\begin{assum}\label{Om}
Let $\Om\Subset N$ be a {\it{strict annular region}}, i.e., the subspace $\Om$ is isometric to
\eq{\label{ann}\Om=(a,b)\x\bbS^{n},\quad \bar g=\s dr^{2}+\vt^{2}(r)\hat g,\quad a<b,}
where $\hat g$ denotes the round metric on $\bbS^{n}$ and $\vt\in C^{\8}([a,b])$ is positive.
In the sequel, for all such strict annular regions we will assume that $\vt'>0$. In the cases $N=\bbS^{n+1}$ and $N=\bbS^{n,1}$, this restricts us to $\bbS_{+}^{n+1}$ and $\bbS^{n,1}_{+}$ respectively.
\end{assum}

For a graphical hypersurface in $\Om$,
\eq{\Si:=\{(r(y),y)\cn y\in \bbS^{n}\},}
a support function $s$ can be defined by
\eq{s=\s\bar g(\vt\del_{r},{\nu}).}
We always choose the normal of a graphical hypersurface to be future directed. We have
\eq{\label{support}s=\fr{\vt}{v},}
where
\eq{\label{v}v^{2}=1+\s \vt^{-2}\hat g^{ij}r_{;i}r_{;j}.}
This is easily seen from the coordinate expression of $\nu$ in the given coordinate system.
With this choice, the support function, as well as the principal curvatures of the slices $\{r\}\x \bbS^{n}$ are always positive and in particular given by
\eq{\bar\ka_{i}=\fr{\vt'(r)}{\vt(r)}.}

We artificially introduce the variable $s$ into the domain of $f$, in order to distinguish between properties that $f$ must satisfy with respect to $x$ but not with respect to $s$. We define the positive cone as
\eq{\G_{+}=\{\ka\in\bbR^{n}\cn \ka_{i}>0\quad\fa 1\leq i\leq n \}.}
In the sequel $\s_{k}$ denotes the $k$-th elementary symmetric polynomial
\eq{\s_{k}(\ka)=\sum_{1\leq i_{1}<\dots<i_{k}\leq n}\ka_{i_{1}}\dots \ka_{i_{k}}.}

\begin{defn}
Let $F\in C^{\8}(\G_{+})$, $\Om\sub N$ open and
\eq{f\in C^{\8}(\bbR_+\x \bar{\Om}\x\ti N). }
We say that a closed, strictly convex hypersurface $M\sub \Om$ is a lower barrier for the pair $(F, f)$, if
\eq{\label{lower barrier}\s( f(s,x,\nu)-F(\ka_{i}))_{|M}\geq 0}
and an upper barrier, if
\eq{\label{upper barrier}\s( f(s,x,\nu)-F(\ka_{i}))_{|M}\leq 0.}
\end{defn}

\begin{defn}
A $1$-homogeneous positive function $F\in C^{\8}(\G_{+})$ is inverse concave if
		\eq{F_{\ast}(\ka_{i})=\fr{1}{F(\ka_{i}^{-1})}~\hbox{is concave}.}
\end{defn}

\section{Results}\label{sec:results}

\subsection{Prescribed curvature equations}

\begin{assum}\label{F1}
Suppose
\enum{
\item $F\in C^{\8}(\G_{+})$ is a positive, strictly monotone, $1$-homogeneous function normalized to $F(1,\dots,1)=n,$
\item If $N=\bbE^{n+1}$ or $\bbH^{n+1}$, $F$ is inverse concave and $F_{\ast|\del\G_{+}}=0,$
\item if $N=\bbS^{n+1}$, $F$ is concave and inverse concave and $F_{\ast|\del\G_{+}}=0$
\item if $N=\bbS^{n,1}_{+}$, $F$ is convex and uniformly monotone up to $\del\G_+.$
}
\end{assum}

\begin{remark}
If $F$ is $1$-homogeneous and convex, then it is inverse concave. Moreover, since due to convexity $F\geq H$, the inverse of $F$ vanishes on $\del\G_+$. Later in this paper, we will investigate the effect of dropping the restriction $F_{\ast|\del\G_{+}}=0$; see \Cref{F2}.
\end{remark}

The first main results of this paper is stated in the next theorem.

\begin{thm}\label{Flow-Main}
Let $\Om$ be a strict annular region of $N$ as described in \Cref{Om}.
Suppose $F$ satisfies \Cref{F1} and $f\in C^{\8}(\bbR_+\x \bar\Om\x\ti N)$ is a positive function. Suppose $M_{0}=\{a\}\x\bbS^{n}$  and $\ti M_{0}=\{b\}\x\bbS^{n}$ is a lower resp. an upper barrier for the pair $(F,f).$
 Suppose $\phi=-f^{-1}$ satisfies
\eq{\label{Flow-Main-1}\bar D^{2}_{xx}\phi+K_{N}\phi\bar g< 0.}
Then there exists a unique smooth time-dependent family of embeddings
\eq{x\cn [0,\8)\x \bbS^{n}\ra \bar\Om}
satisfying the curvature flow equation
\eq{\label{Flow-Main-2}\dot{x}&=\s\br{\phi(s,x,\nu)+\fr{1}{F}}\nu,}
where in the Riemannian case we start the flow from $M_{0}$, while in the Lorentzian case we start it from $\ti M_{0}$.
The embeddings $x(t,\cdot)$ converge to a strictly convex solution of the generalized Minkowski problem
\eq{\label{GMP}F(\ka)=f(s,x,\nu).}
\end{thm}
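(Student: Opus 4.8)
The plan is to run the standard parabolic machinery for the curvature flow \eqref{Flow-Main-2}: short-time existence, a priori $C^0$, $C^1$, then $C^2$ estimates, followed by Krylov--Safonov/Evans--Krylov-type higher regularity, long-time existence, and finally convergence to a steady state. The barrier hypotheses give the $C^0$ bound, the annular geometry gives the gradient bound, and the structural condition \eqref{Flow-Main-1} together with inverse concavity is what makes the curvature estimates work.

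First I would set up the flow as a scalar parabolic PDE. Writing $\Si_t$ as a radial graph $r(t,y)$ over $\bbS^n$, equation \eqref{Flow-Main-2} becomes a fully nonlinear scalar equation $\dot r = $ (some function of $r$, $\n r$, $\n^2 r$) which is parabolic because $\Phi' = F^{-2}\,\del F/\del\ka_i > 0$ and $F$ is strictly monotone; short-time existence and uniqueness then follow from standard theory. For the $C^0$ estimate I would invoke the maximum principle: since $M_0=\{a\}\x\bbS^n$ is a lower barrier and $\ti M_0=\{b\}\x\bbS^n$ is an upper barrier for $(F,f)$, and since these are themselves slices of the annular region, the flow hypersurfaces remain trapped between $r=a$ and $r=b$ for all time; one checks that the slices are sub-/super-solutions using $\bar\ka_i = \vt'/\vt$ and the definition of the barriers, so $a\le r(t,\cdot)\le b$. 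For the $C^1$ (gradient) estimate I would use the function $v$ from \eqref{v}: a computation of the evolution of $v$ (or equivalently of the support function $s=\vt/v$) together with $\vt'>0$ and the already-established $C^0$ bounds yields a uniform bound $v\le C$, hence $|\n r|\le C$. This is where I expect the annular assumption $\vt'>0$ to be used decisively.

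The heart of the matter is the two-sided curvature bound. For the \emph{lower} bound on the principal curvatures I would examine the evolution of the second fundamental form, or more precisely of the principal radii $r_i=\ka_i^{-1}$, using the reformulation in terms of the support function. Here inverse concavity of $F$ (equivalently concavity of $F_\ast$) is exactly the condition that controls the bad terms in the evolution of the largest principal radius, while the strict inequality \eqref{Flow-Main-1}, $\bar D^2_{xx}\phi + K_N\phi\,\bar g<0$, feeds in as a favorable zeroth-order term exactly as the convexity assumptions of \cite{CaffarelliGuanMa:12/2007,GuanRenWang:/2013} do in the constant-rank setting — it forces the radii to stay bounded above, i.e.\ $\ka_i\ge c>0$. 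For the \emph{upper} bound on $\ka_i$ I would use that $F_{\ast|\del\G_+}=0$: as a principal curvature tries to blow up, one of the radii tends to zero, $F_\ast\to 0$ there, which via the flow equation and the maximum principle applied to a suitable auxiliary quantity (e.g.\ $\ka_n$ or $\log\ka_n$ minus a large multiple of the support function) prevents the blow-up. In the sphere case the extra concavity of $F$ is needed to close the argument, and in the de Sitter case \Cref{F1}(iv) replaces these by convexity and uniform monotonicity up to $\del\G_+$; as remarked in the introduction, in the non-Euclidean Riemannian cases and the Lorentzian case one passes to the dual hypersurface in $\ti N$ and runs the estimates there, exploiting that the dual of \eqref{Flow-Main-2} is again of the same admissible type.

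Once uniform $C^2$ bounds and uniform parabolicity are in hand, the operator is concave (or convex, handled by the dual) in the Hessian, so Evans--Krylov gives $C^{2,\al}$, and then Schauder bootstrapping gives uniform $C^\infty$ bounds for all $t\ge 0$; hence the flow exists for all time. For convergence I would show the speed $\phi+F^{-1}$, or equivalently $f-F$, decays: its evolution equation is parabolic with a zeroth-order term of a sign forced by \eqref{Flow-Main-1}, so the maximum principle gives exponential decay of $\osc(f-F)$ and, via an interpolation/Łojasiewicz-type or direct energy argument combined with the uniform smoothness, convergence of $r(t,\cdot)$ to a limit $r_\8$ solving $F(\ka)=f(s,x,\nu)$, which is \eqref{GMP}; strict convexity of the limit is inherited from the uniform lower curvature bound. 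The main obstacle is the simultaneous upper and lower curvature estimate: balancing the use of inverse concavity, the vanishing of $F_\ast$ on $\del\G_+$, and the strict sign condition \eqref{Flow-Main-1}, and making the whole scheme work uniformly in the ambient curvature $K_N$ (in particular handling the Lorentzian de Sitter case via duality) is the technically delicate part.
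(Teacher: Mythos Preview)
Your overall architecture is right, but you are missing the single observation that makes the argument close cleanly: the sign of the speed is preserved along the flow. From \Cref{ev of speed}, the quantity $\phi-\Phi=\phi+F^{-1}$ satisfies a linear parabolic equation in which it appears only as a common factor on the right-hand side; by the maximum principle it therefore keeps its initial sign. Starting from the lower barrier $M_{0}$ in the Riemannian case (resp.\ the upper barrier in the Lorentzian case) thus forces $\s(\phi+F^{-1})\geq 0$ for all time, and this single fact delivers two of your missing ingredients at once. First, it gives $F\leq f\leq C$; combined with the lower curvature bound $\ka_{1}\geq c$ and $F_{\ast|\del\G_{+}}=0$, the upper curvature bound follows by pure algebra,
\eq{C\geq \fr{F}{\ka_{1}}=F\br{1,\dots,\fr{\ka_{n}}{\ka_{1}}}=\fr{1}{F_{\ast}(\ka_{1}/\ka_{n},\dots,1)},}
so $\ka_{n}/\ka_{1}$ stays bounded. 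No separate maximum-principle argument on $\ka_{n}$ is needed, and your sketch of one is not self-contained: using that ``$F_{\ast}\to 0$'' near $\del\G_{+}$ only tells you $F\to\8$, which is useless without already knowing $F\leq C$. Second, the sign of the speed makes $r(t,\cdot)$ monotone in $t$; since $r$ is bounded by the barriers, $\ti r=\lim_{t\to\8}r$ exists, and integrating $\abs{\dot r}=v(F^{-1}+\phi)$ shows the speed is $L^{1}$ in time, hence zero at the smooth limit. Your proposed route to convergence via exponential decay of $\osc(f-F)$ ``forced by \eqref{Flow-Main-1}'' is not substantiated: look at the zeroth-order coefficient in \Cref{ev of speed} and note that \eqref{Flow-Main-1} concerns $\bar D^{2}_{xx}\phi$, which does not appear there.

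Two smaller divergences from the paper. The gradient bound is obtained statically, not from an evolution equation: any weakly convex spacelike hypersurface in an annular region with $\vt'>0$ has $v+v^{-1}\leq C(\Om)$ (\Cref{grad}). And for the lower curvature bound the paper runs the maximum principle on the trace $B=b^{r}_{r}$ of the inverse Weingarten map, using \Cref{Ev-b} together with the inverse-concavity inequality \eqref{invers concav}; in the case $K_{N}=1$ an auxiliary function $w=\log B+\al(\vt')$ is needed to absorb the term $F^{kl}g_{kl}B$. Your description of this step is in the right spirit but vaguer than what is actually required.
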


\begin{rem}~
\enum{
\item In the case $N=\bbE^{n+1}$, we have $s=\langle x,\nu\rangle$, so we may think of $f(s,x,\nu)$ purely as a function of $(x,\nu)$ as long as $s>0.$ The point of introducing $s$ as an independent variable is that the convexity assumption \eqref{Flow-Main-1} does not apply to the $x$-dependence of $s$, cf., \Cref{cor1}. In other ambient spaces, we can relax the convexity assumption as well, by allowing $f$ to depend on $s$ without any restriction. Hence our (strict) convexity principle crucially differs from the convexity assumptions with respect to $x$ in \cite{CaffarelliGuanMa:12/2007,ChenLiWang:04/2018,GuanRenWang:/2013}. Note that compared to \cite[Thm.~1.2]{CaffarelliGuanMa:12/2007}, our extra assumption $F_{\ast|\del\G_{+}}=0$ is merely used for deriving $C^2$ estimates, essential in deducing \textit{the long time behavior} of the flow.

\item For $F=\sigma_k^{1/k}$ and $N=\bbE^{n+1},$ let us compare \Cref{Flow-Main} with a similar existence result obtained by Guan-Ren-Wang in \cite[Thm.~1.5]{GuanRenWang:/2013}, where the authors prove the following statement. Suppose $f\in C^{2}(\bbE^{n+1}\times \bbS^n)$ is a positive function and \eqref{upper barrier} holds for $M=B_r(0)$ for some $r>0$. Moreover, assume $f^{-\fr 1k}$ is \textit{weakly} convex with respect to $x$; i.e.,
\eq{\bar D^2_{xx}f^{-\fr 1k}\geq 0.}
Then there exists a strictly convex $C^{3,\alpha}$-solution inside $B_r(0)$ to the equation
\eq{\s_{k}=f(x,\nu).}
In \eqref{Flow-Main-1}, we have a strict inequality to ensure the strict convexity of solutions and that we can obtain curvature bounds along the flow. In contrast, in \cite{GuanRenWang:/2013}, the strict convexity of the solution follows from a sophisticated constant rank theorem. Also note that we assume both lower and upper barrier conditions. Our lower barrier assumption is required as we allow $f\in C^{\8}(\bbR_+\times\bbE^{n+1}\times\bbS^{n})$ to become singular at $s=0$, while in \cite{GuanRenWang:/2013}, $f\in C^{\8}(\bbE^{n+1}\times\bbS^{n})$. Consequently the $L_{p}$-Minkowski problem described below is not covered by the existence theorem \cite[Thm.~1.5, case $k=n$]{GuanRenWang:/2013}: The upper barrier condition requires us to have $p>n+1$ (see \Cref{KN=1}, \Cref{thm:Lp-Mink} and \Cref{rem:Lp-Mink} below), but this forces $f$ to blow up at the origin.
}
\end{rem}

To clarify item (i) in the previous remark that the support function does not play any role in the convexity assumption of $f^{-1}$ with respect to $x$, we state two corollaries of \Cref{Flow-Main}.

\begin{cor}\label{cor1}
Let $N=\mathbb{E}^{n+1}$ and $p+k<n+1$. Suppose $\p\in C^{\infty}(\mathbb{S}^n)$ is positive and
\eq{|x|^{\frac{n+1}{k}}\p\left(\frac{x}{|x|}\right)^{-\frac{1}{k}}~\mbox{is strictly convex on}~ \mathbb{R}^{n+1}\setminus\{0\}.}
Then the prescribed curvature measure problem
\eq{\sigma_k=\frac{s^{p}}{|x|^{n+1}}\p\left(\frac{x}{|x|}\right)}
admits a strictly convex solution.
\end{cor}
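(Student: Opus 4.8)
The plan is to derive \Cref{cor1} as a direct specialization of \Cref{Flow-Main} in the Euclidean case $N=\bbE^{n+1}$, where $\vt(r)=r$, $K_N=0$, and the support function is $s=\ip{x}{\nu}$. First I would set $F=\s_k^{1/k}$ (suitably normalized so $F(1,\dots,1)=n$ by a harmless constant rescaling of $f$, which does not affect convexity), and check that this $F$ satisfies \Cref{F1}(i)--(ii): it is positive, strictly monotone, and $1$-homogeneous on $\G_+$; it is inverse concave because $(\s_k^{1/k})_\ast = (\s_{n-k}/\s_n)^{1/k}$ up to normalization is a well-known concave function on $\G_+$; and $F_\ast$ vanishes on $\del\G_+$ since $\s_{n-k}/\s_n\to 0$ as any $\ka_i\to\8$, equivalently as any $\ka_i^{-1}\to 0$. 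Next I would define $f(s,x,\nu):=\dfrac{s^{p/k}}{\abs{x}^{(n+1)/k}}\,\p\!\left(\dfrac{x}{\abs{x}}\right)^{1/k}$ so that $F(\ka)=f$ is precisely the equation $\s_k = s^p\abs{x}^{-(n+1)}\p(x/\abs{x})$; note $f\in C^\8(\bbR_+\x(\bbR^{n+1}\setminus\{0\})\x\bbS^n)$.

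The key computation is to verify the convexity hypothesis \eqref{Flow-Main-1}, which in the Euclidean case reads $\bar D^2_{xx}\phi < 0$ for $\phi=-f^{-1}$. We have
\eq{\phi = -f^{-1} = -\abs{x}^{(n+1)/k}\,\p\!\left(\fr{x}{\abs{x}}\right)^{-1/k} s^{-p/k}.}
For fixed $s$ and $\nu$, the $x$-dependence of $\phi$ is $-\,s^{-p/k}$ times the function $g(x):=\abs{x}^{(n+1)/k}\p(x/\abs{x})^{-1/k}$, which is exactly the function assumed strictly convex on $\bbR^{n+1}\setminus\{0\}$ in the statement of \Cref{cor1}. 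Since $s^{-p/k}>0$, we get $\bar D^2_{xx}\phi = -s^{-p/k}\,\bar D^2_{xx} g < 0$, so \eqref{Flow-Main-1} holds. (Here I am using precisely the point emphasized in the remark following \Cref{Flow-Main}: the convexity assumption is imposed only on the $x$-dependence of $f^{-1}$ at \emph{fixed} $s$, even though on the solution $s=\ip{x}{\nu}$ is itself a function of $x$.)

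It then remains to produce the lower and upper barriers required by \Cref{Flow-Main}, i.e. to exhibit a strict annular region $\Om=(a,b)\x\bbS^n$ with $\{a\}\x\bbS^n$ a lower barrier and $\{b\}\x\bbS^n$ an upper barrier for $(F,f)$. On a sphere of radius $\rho$ centered at the origin we have $\ka_i=1/\rho$, $s=\rho$, $\abs{x}=\rho$, hence $F(\ka)=\s_k^{1/k}=\binom{n}{k}^{1/k}\rho^{-1}$ while $f=\rho^{(p-n-1)/k}\,\p(x/\abs{x})^{1/k}$. Since $p+k<n+1$ we have $(p-n-1)/k < -1$, so as $\rho\to 0^+$ the right-hand side $f$ grows faster than $\rho^{-1}$ and dominates $F$, giving a lower barrier for small $a>0$; as $\rho\to\8$, $f$ decays while $F\sim\rho^{-1}$, so $f<F$ and we get an upper barrier for large $b$. (One uses $\min_{\bbS^n}\p>0$ and $\max_{\bbS^n}\p<\8$ to make these comparisons uniform.) With all hypotheses of \Cref{Flow-Main} verified on this $\Om$, the theorem yields a strictly convex solution of $F(\ka)=f(s,x,\nu)$ inside $\Om$, which is the desired solution of the prescribed curvature measure problem. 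The main obstacle, and the only place requiring genuine care, is the translation of the hypothesis "$\abs{x}^{(n+1)/k}\p(x/\abs{x})^{-1/k}$ strictly convex" into \eqref{Flow-Main-1} for $\phi=-f^{-1}$ with the artificial variable $s$ frozen — together with checking that the barrier inequalities indeed go the right way, which is governed by the sign of $p+k-(n+1)$.
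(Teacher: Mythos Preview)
Your proposal is correct and follows essentially the same route as the paper: set $F=\s_k^{1/k}$, $f(s,x)=s^{p/k}\abs{x}^{-(n+1)/k}\p(x/\abs{x})^{1/k}$, observe that $\phi=-f^{-1}$ has strictly negative $x$-Hessian by the assumed convexity (with $s$ frozen), and produce spherical barriers from the scaling inequality $(p-n-1)/k<-1$. One small slip: the dual function is $(\s_k^{1/k})_\ast=(\s_n/\s_{n-k})^{1/k}$, not its reciprocal, and it vanishes on $\del\G_+$ because $\s_n\to 0$ as some $\ka_i\to 0$; your conclusion is right but the formula and the boundary argument are inverted.
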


\begin{proof}
We let $F = \sigma_k^{1/k}$ and
\eq{f(x, s) = \frac{s^{p/k}}{|x|^{(n+1)/k}} \p\br{\frac{x}{\abs{x}}}^{1/k}} in \Cref{Flow-Main}. The concavity requirement for $\phi$ follows immediately from the hypothesis that $1/f$ is convex in $x$. In view of \cite[Lem. 2.2.11]{Gerhardt:/2006}, $F$ satisfies \Cref{F1}. Due to the range of $k,p$ the barrier assumption is satisfied as well: the scaling properties of $\ka,s$ and $\abs{x}$ imply that $f$ scales like $\lambda^{(p-n-1)/k}$ while $F$ scales like $\lambda^{-1}$. Thus provided $p - n - 1 < -k$, for sufficiently large $\la$ the barrier assumption holds in the region
\eq{\Om=\{\la^{-1}<\abs{x}<\la\}.} Hence we can apply \Cref{Flow-Main}.
\end{proof}

Of course here we could allow non-homogeneous functions of $s$ as well as functions of the outer unit normal as long as the barrier assumption is satisfied.
For $p=1$, this theorem was first proved in \cite[Thm.~1.1]{GuanLinMa:/2009} and then under a weaker convexity assumption in \cite[Thm.~3.7]{GuanLiLi:/2012}. The case $k=n$ and $p=1$ is known as the Aleksandrov problem (Gauss curvature measure problem) which is solved completely and does not require the convexity assumption \cite{GuanLi:08/1997}.

\begin{cor}
Let $N=\mathbb{E}^{n+1}$. Suppose $f\in C^{\infty}(\mathbb{S}^n)$ is positive. Then there exists a strictly convex solution to each of the following problems.
\eq{\label{intermediate dual}|x|^{n+1-q}\sigma_k=sf(\nu)~\mbox{when}~q+k<n,}
\eq{\label{intermediate Lp Alek}|x|^{n+1}\sigma_k=s^{1-p}f(\nu)~\mbox{when}~p>k-n.}
\end{cor}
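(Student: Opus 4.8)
The plan is to deduce both statements from \Cref{Flow-Main} by choosing $F=\sigma_k^{1/k}$ and checking that the corresponding prescribed function, after taking the appropriate power, satisfies the strict convexity condition \eqref{Flow-Main-1} and that a suitable strict annular region carrying the required lower and upper barriers exists. Throughout we work in $N=\bbE^{n+1}$, where $\Om=\{\la^{-1}<\abs{x}<\la\}$ is a strict annular region, $K_N=0$, and $s=\ip{x}{\nu}$ on any convex hypersurface enclosing the origin, so that condition \eqref{Flow-Main-1} reduces to the requirement that $\phi=-f^{-1}$ be concave in $x$ alone; equivalently $f^{-1}$ is convex in $x$.

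First, for \eqref{intermediate dual}: I would rewrite the equation as $\sigma_k=s\abs{x}^{q-n-1}f(\nu)$ and set $F=\sigma_k^{1/k}$, $\tilde f(s,x,\nu)=s^{1/k}\abs{x}^{(q-n-1)/k}f(\nu)^{1/k}$. By \cite[Lem.~2.2.11]{Gerhardt:/2006}, $F$ satisfies \Cref{F1}(i)--(ii). The function $\tilde f^{-1}=s^{-1/k}\abs{x}^{(n+1-q)/k}f(\nu)^{-1/k}$: since the $s$- and $\nu$-dependence is irrelevant to \eqref{Flow-Main-1} in the Euclidean case, I only need $\abs{x}^{(n+1-q)/k}$ to be convex on $\bbR^{n+1}\setminus\{0\}$, which holds whenever the exponent $(n+1-q)/k\ge 1$, i.e. $q+k\le n+1$. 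For the strict inequality in \eqref{Flow-Main-1}, note that a positive power strictly bigger than one of $\abs{x}$ is strictly convex away from the origin, so the condition $q+k<n$ (which gives exponent strictly bigger than $1$, indeed at least $(n+1-q)/k>1/k\cdot(n+1-q)$; more simply $q+k<n\Rightarrow n+1-q>k+1>k$) ensures strict convexity, hence \eqref{Flow-Main-1}. For the barriers: under the scaling $x\mapsto\la x$ one has $\sigma_k\mapsto\la^{-k}\sigma_k$, $s\mapsto\la s$, $\abs{x}\mapsto\la\abs{x}$, so $F$ scales like $\la^{-1}$ while $\tilde f$ scales like $\la^{(1+q-n-1)/k}=\la^{(q-n)/k}$. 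Since $q+k<n$ gives $q-n<-k$, for $\la$ large the sphere $\{\abs{x}=\la^{-1}\}$ is a lower barrier and $\{\abs{x}=\la\}$ an upper barrier for $(F,\tilde f)$ inside $\Om=\{\la^{-1}<\abs{x}<\la\}$. Then \Cref{Flow-Main} yields a strictly convex solution.

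Second, for \eqref{intermediate Lp Alek}: rewrite as $\sigma_k=s^{1-p}\abs{x}^{-(n+1)}f(\nu)$, set $F=\sigma_k^{1/k}$ and $\tilde f(s,x,\nu)=s^{(1-p)/k}\abs{x}^{-(n+1)/k}f(\nu)^{1/k}$. Again $F$ satisfies \Cref{F1}(i)--(ii) by \cite[Lem.~2.2.11]{Gerhardt:/2006}. Now $\tilde f^{-1}=s^{(p-1)/k}\abs{x}^{(n+1)/k}f(\nu)^{-1/k}$; the only relevant factor for \eqref{Flow-Main-1} is $\abs{x}^{(n+1)/k}$, and since $(n+1)/k>1$ this is strictly convex on $\bbR^{n+1}\setminus\{0\}$, so \eqref{Flow-Main-1} holds unconditionally. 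For the barriers, under $x\mapsto\la x$ one gets $F\sim\la^{-1}$ and $\tilde f\sim\la^{(1-p-n-1)/k}=\la^{-(n+p)/k}$; the inequality $p>k-n$ is equivalent to $n+p>k$, i.e. $-(n+p)/k<-1$, so again for large $\la$ the inner sphere is a lower barrier and the outer sphere an upper barrier in $\Om=\{\la^{-1}<\abs{x}<\la\}$, and \Cref{Flow-Main} applies.

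The only genuinely delicate points are bookkeeping ones: verifying that the exponents produced by the homogeneities land in the range that makes $\abs{x}^\alpha$ \emph{strictly} convex (i.e. $\alpha>1$, not merely $\alpha\ge1$), and checking the barrier scaling inequalities are strict so that the spheres are genuine (not just weak) barriers for large $\la$. I expect the main obstacle — such as it is — to be confirming the strictness in \eqref{Flow-Main-1} for \eqref{intermediate dual}, where the exponent $(n+1-q)/k$ is only guaranteed $>1$ thanks to $q+k<n$; if one only had $q+k\le n+1$ the convexity could degenerate and the strict convexity principle would fail. Everything else is a direct substitution into \Cref{Flow-Main} together with the elementary scaling computation already used in the proof of \Cref{cor1}.
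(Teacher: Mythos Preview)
Your proposal is correct and follows essentially the same approach as the paper: apply \Cref{Flow-Main} with $F=\sigma_k^{1/k}$, verify the strict convexity of the relevant power of $\abs{x}$ (automatic for \eqref{intermediate Lp Alek}, ensured by $q+k<n$ for \eqref{intermediate dual}), and check the barrier conditions via the scaling argument of \Cref{cor1}. The paper's proof is more terse but records exactly these points.
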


\begin{proof}
This is similar to the proof of \Cref{cor1}. Note that for equation \eqref{intermediate dual}, scaling forces $q + k < n$ which also ensures convexity of $|x|^{\frac{n+1-q}{k}}$. For equation \eqref{intermediate Lp Alek} convexity is automatic.
\end{proof}

The case $k=n$ of \eqref{intermediate dual} with $q\in \mathbb{R}$ is known as the Minkowski problem for $q$-th dual curvature measure, a.k.a. the dual Minkowski problem which in particular includes the logarithmic Minkowski problem $(q=n+1)$ and the Aleksandrov problem ($q=0$). The dual Minkowski problem was introduced in \cite{HuangLutwakYangZhang:06/2016}, see also \cite{ChenHuangZhao:07/2019,HuangJiang:10/2019,HuangZhao:07/2018,LiShengWang:/2020,Zhao:/2018}. The case $k=n$ of \eqref{intermediate Lp Alek} is known as the $L_p$-Aleksandrov problem which was introduced and studied for measures in \cite{HuangLutwakYangZhang:02/2018}. For results on the uniqueness of solutions see \cite{LutwakYangZhang:04/2018,HuangZhao:07/2018}. The curvature problems \eqref{intermediate dual} and \eqref{intermediate Lp Alek} for $1\leq k<n$ can be considered as the intermediate cases of the dual Minkowski and $L_p$-Aleksandrov problems. The former was introduced in \cite{LiShengWang:03/2020} where due to lack of variational structure no existence result was obtained. Our result seems to be the first that addresses the issue of existence and convexity.

\subsection{Generalized {\texorpdfstring{$L_{p}$-Minkowski}{Lp-Minkowski}} problems}\label{subsec:CMP}

An important class of curvature problems which includes the $L_{p}$-Minkowski problem is
\eq{\label{s-nu-flow}F=f(s,\nu).}
The smooth $L_{p}$-Minkowski problem seeks a smooth strictly convex solution to the equation
\eq{\label{Lp-Mink}\s_{n}=cs^{1-p}\p(\nu)~\hbox{for some constant}~c>0,}
where $\p\in C^{\8}(\ti N)$ and $p\in \bbR$.
This problem is a generalization of the Minkowski problem (i.e., $p=1$) which was put forward by Lutwak \cite{Lutwak:/1993} almost a century after Minkowski's original work and stems from Firey's $L_p$-linear combination of convex bodies \cite{Firey:/1962}; also see \cite{BianchiBoroczkyColesanti:02/2020,BianchiBoroczkyColesantiYang:01/2019,BoroczkyLutwakYangZhang:06/2013,ChouWang:09/2006,ChengYau:/1976,Ivaki:11/2019,LutwakOliker:/1995}. We proceed with a generalized $L_{p}$-Minkowski problem in $\bbS^{n+1}$.

\begin{cor}\label{KN=1}
Let $N=\bbS^{n+1}$ and $\p\in C^{\8}(\bbS^{n+1})$ be positive. Suppose $q>2$ and $F$ satisfies \Cref{F1}. Then there exists a strictly convex solution in $\bbS^{n+1}_{+}$ to
\eq{F=cs^{1-q}\p(\nu)~\hbox{for some positive constant}~c.}
\end{cor}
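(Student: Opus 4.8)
The plan is to apply \Cref{Flow-Main} directly, with the anisotropic speed
\eq{f(s,x,\nu)=cs^{1-q}\p(\nu),}
treating the constant $c>0$ and the annular region as parameters to be fixed so that the barrier hypotheses hold. First I would note that this $f$ is positive and smooth on $\bbR_+\x\bar\Om\x\ti N$ (recall $\ti\bbS^{n+1}=\bbS^{n+1}$) and, crucially, does not depend on the position $x$. Hence $\bar D^2_{xx}\phi=0$ for $\phi=-f^{-1}$, and since $K_N=1>0$ while $\phi<0$, the structural condition \eqref{Flow-Main-1} reduces to $\phi\bar g<0$, which holds automatically. Given that $F$ satisfies \Cref{F1} by hypothesis, the only remaining input of \Cref{Flow-Main} to check is the existence of inner and outer barrier slices.

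Next I would evaluate the relevant quantities on a geodesic sphere $\{r\}\x\bbS^{n}\sub\bbS^{n+1}_{+}$, where $\vt(r)=\sin r$ and $r$ ranges over $(0,\pi/2)$ (this is exactly the restriction built into \Cref{Om}). On such a slice all principal curvatures equal $\cot r$, so by $1$-homogeneity and the normalization $F(1,\dots,1)=n$ one gets $F=n\cot r$; since $v\equiv1$ on a slice, $s=\vt(r)=\sin r$ and therefore $f=c(\sin r)^{1-q}\p(\nu)$. Writing $\p_{\min}$ and $\p_{\max}$ for the (positive) minimum and maximum of the continuous function $\p$ on the compact manifold $\bbS^{n+1}$, the lower barrier inequality $\s(f-F)\geq0$ at $r=a$ is implied by $c(\sin a)^{1-q}\p_{\min}\geq n\cot a$, i.e.
\eq{c\geq \fr{n\cos a\,(\sin a)^{q-2}}{\p_{\min}},}
and the upper barrier inequality at $r=b$ is implied by $c(\sin b)^{1-q}\p_{\max}\leq n\cot b$, i.e.
\eq{c\leq \fr{n\cos b\,(\sin b)^{q-2}}{\p_{\max}}.}

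It remains to choose $0<a<b<\pi/2$ and $c>0$ compatible with both bounds, and this is precisely where $q>2$ is used: fixing any $b\in(0,\pi/2)$, the upper bound $n\cos b\,(\sin b)^{q-2}/\p_{\max}$ is a fixed positive number, whereas $n\cos a\,(\sin a)^{q-2}/\p_{\min}\to0$ as $a\to0^{+}$ because the exponent $q-2$ is positive. Hence for $a$ small enough the admissible interval for $c$ is nonempty; choosing $c$ in it makes $M_0=\{a\}\x\bbS^{n}$ a lower barrier and $\ti M_0=\{b\}\x\bbS^{n}$ an upper barrier for $(F,f)$ on the strict annular region $\Om=(a,b)\x\bbS^{n}$. \Cref{Flow-Main} then produces a strictly convex solution of $F(\ka)=cs^{1-q}\p(\nu)$ inside $\bar\Om\sub\bbS^{n+1}_{+}$, as claimed. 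I do not anticipate a genuine obstacle here; the only point needing care is the bookkeeping of the two barrier inequalities and verifying they are genuinely compatible, which as noted hinges on the hypothesis $q>2$.
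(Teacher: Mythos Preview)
Your proposal is correct and follows essentially the same route as the paper: verify \eqref{Flow-Main-1} trivially (since $f$ is independent of $x$, so $\bar D^2_{xx}\phi=0$ and $K_N\phi\bar g=\phi\bar g<0$), compute $F=n\cot r$ and $s=\sin r$ on slices, and use $q>2$ so that $(\sin a)^{q-2}\to 0$ as $a\to 0^+$ to fit spherical barriers before invoking \Cref{Flow-Main}. The only cosmetic difference is that the paper fixes $b$ and then chooses $c$ small, whereas you phrase the barrier compatibility as a two-sided inequality for $c$; the content is identical.
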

\pf{
For $s>0$ define
\eq{f=cs^{1-q}\p(\nu)=c(\bar g(\vt\del_{r},\nu))^{1-q}\p(\nu),}
where $c$ will be chosen to ensure the existence of barriers in $\bbS^{n+1}_{+}$.
We have $\vt(r)=\sin r$ and the principal curvatures of the geodesic spheres satisfy
\eq{\bar \ka_{i}=\fr{\vt'}{\vt}=\cot r.}
Let $0<b<\tfrac{\pi}{2}$ and choose
\eq{0<c<\inf_{\nu\in\bbS^{n+1}}\fr{n\cos b}{\p(\nu)\sin^{2-q}b},}
then on geodesic spheres we have
\eq{(f-F)_{|r=b}=c\p\sin^{1-q}b-n\cot b<0}
where we used $1$-homogeneity and the normalization of $F$. On the other hand there holds
\eq{\liminf_{r\ra 0}(c\p\sin^{1-q}r-n\cot r)>0}
due to $q>2$.
Defining $\Om=(a,b)\x \bbS^{n}$ with $b$ as above and $a$ sufficiently small, the assumptions of \Cref{Flow-Main} are satisfied and we obtain our solution.
}

The proof of the next theorem regarding the Euclidean case will be given in \Cref{pf:Lp-Mink}.

\begin{thm}\label{thm:Lp-Mink}
Let $N=\bbE^{n+1}$, $q>2$ and $\p\in C^{\8}(\bbS^{n})$ be positive. Suppose that $F$ satisfies \Cref{F1}. Then there exists a strictly convex solution to \eq{F=s^{1-q}\p(\nu).}
\end{thm}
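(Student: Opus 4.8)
The statement is the Euclidean version of \Cref{KN=1}, so the strategy is to realize it as an application of \Cref{Flow-Main} with $N = \bbE^{n+1}$, $f(s,x,\nu) = s^{1-q}\p(\nu)$, and then run the contracting-type flow \eqref{Flow-Main-2}. Three things must be checked: (a) that $F$ genuinely satisfies \Cref{F1} in the Euclidean case, which is exactly hypothesis (ii) together with the standing assumptions, so there is nothing further to do; (b) that the concavity condition \eqref{Flow-Main-1} holds for $\phi = -f^{-1} = -s^{q-1}\p(\nu)^{-1}$; and (c) that one can choose a strict annular region $\Om = (a,b)\x\bbS^n$ on which the inner slice $\{a\}\x\bbS^n$ is a lower barrier and the outer slice $\{b\}\x\bbS^n$ is an upper barrier for the pair $(F, f)$.

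For (b), the key point — emphasized in the remark after \Cref{Flow-Main} — is that the convexity assumption \eqref{Flow-Main-1} concerns the $x$-dependence of $\phi$ only, and $\phi$ here depends on $x$ \emph{solely through} $s = \langle x,\nu\rangle$ (for fixed $\nu$). Since $K_N = 0$ in the Euclidean case, \eqref{Flow-Main-1} reads $\bar D^2_{xx}\phi < 0$; but $\phi$ as a function of $x$ is $x \mapsto -\langle x,\nu\rangle^{q-1}\p(\nu)^{-1}$, whose Euclidean Hessian in $x$ is $-(q-1)(q-2)\langle x,\nu\rangle^{q-3}\p(\nu)^{-1}\,\nu\otimes\nu$, which is negative semidefinite but \emph{not} strictly negative definite (it is degenerate transverse to $\nu$). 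This is precisely why the variable $s$ was split off from $x$ in the domain of $f$: the hypothesis \eqref{Flow-Main-1} in \Cref{Flow-Main} is a condition on $\bar D^2_{xx}\phi$ with $s$ held fixed, and with $s$ fixed our $\phi$ has no genuine $x$-dependence at all, so $\bar D^2_{xx}\phi = 0 + K_N \phi \bar g$; but wait — with $K_N = 0$ this gives equality, not strict inequality. The resolution is the standard one: perturb. One replaces $f$ by $f_\e(s,x,\nu) = s^{1-q}\p(\nu) e^{-\e|x|^2}$ (or adds a small strictly convex function of $x$ to $f^{-1}$), for which $\bar D^2_{xx}(f_\e^{-1}) = s^{q-1}\p^{-1}e^{\e|x|^2}(2\e\, \mathrm{Id} + 4\e^2 x\otimes x) > 0$ strictly on the bounded region $\Om$, applies \Cref{Flow-Main} to get a strictly convex solution $\Si_\e$ of $F(\ka) = f_\e$, and then takes $\e \to 0$. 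Uniform $C^\infty$ estimates independent of $\e$ (which follow from the a priori estimates established in the proof of \Cref{Flow-Main}, now uniform because the barriers can be chosen $\e$-independent once $\Om$ is fixed and $\e$ small) let one extract a smooth convergent subsequence whose limit solves $F(\ka) = s^{1-q}\p(\nu)$; strict convexity of the limit is preserved by the uniform lower curvature bound.

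For (c), this is a scaling computation identical in spirit to the proof of \Cref{cor1}. Under the dilation $x \mapsto \la x$ a convex body's support function and hence $s$ scale like $\la$, the principal curvatures $\ka_i$ scale like $\la^{-1}$, so $F(\ka)$ scales like $\la^{-1}$ while $f = s^{1-q}\p(\nu)$ scales like $\la^{1-q}$. For $q > 2$ we have $1 - q < -1$, so on a large sphere $|x| = b$ we get $f \ll F$ (upper barrier), while on a small sphere $|x| = a$ we get $f \gg F$ (lower barrier); concretely, on the round sphere of radius $\rho$ one has $F(\ka) = n/\rho$ and $s = \rho$, so $f - F = \rho^{1-q}\p(\nu) - n\rho^{-1}$, which is positive for $\rho$ small (since $1 - q < -1$) and negative for $\rho$ large. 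Choosing $b$ large enough and $a$ small enough gives the required barriers on $\Om = (a,b)\x\bbS^n$, and since $\p$ is bounded above and below on the compact $\bbS^n$ this holds uniformly, and it persists for the perturbed $f_\e$ with $\e$ small. Hence \Cref{Flow-Main} applies to $(F, f_\e)$ on $\Om$.

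\textbf{Main obstacle.} The genuine content is not the barrier bookkeeping but handling the degeneracy of the convexity condition: because $f$ depends on $x$ purely through $s$, the hypothesis \eqref{Flow-Main-1} holds only as an equality (with $K_N = 0$) rather than the strict inequality \Cref{Flow-Main} demands, so one cannot invoke the theorem directly and must argue by approximation. The delicate part of that argument is ensuring the a priori estimates from the proof of \Cref{Flow-Main} — in particular the lower and upper curvature bounds, which underlie both long-time existence and the strict convexity of the limit — are \emph{uniform in $\e$}; this requires checking that every estimate in that proof depends on $f_\e$ only through quantities ($\sup$, $\inf$, finitely many derivatives on the fixed compact $\Om\x\bbS^n$, and the barrier radii) that are controlled independently of small $\e$. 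Granting that, the limit $\e\to 0$ is routine.
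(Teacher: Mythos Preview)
Your perturbation argument has a genuine gap at the lower curvature estimate. In the proof of \Cref{Flow-Main} for $K_N=0$, the bound on $B=\tr b$ is obtained from the strictly negative term $\sum_r \phi_{xx}(x_{;r}\ka_r^{-1},x_{;r}\ka_r^{-1})\leq -\de\,\abs{b}^{2}$, and the constant $\de>0$ is precisely the smallest eigenvalue of $-\bar D^2_{xx}\phi$ on the compact region. For your $\phi_\e=-s^{q-1}\p^{-1}e^{\e\abs{x}^2}$ this eigenvalue is of order $\e$, so the resulting upper bound on $B$ is of order $\e^{-1}$. The estimates are therefore \emph{not} uniform in $\e$, contrary to what you assert: the strict convexity constant is not one of the ``sup, inf, finitely many derivatives'' quantities you list, and it degenerates. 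The paper flags exactly this obstruction in \Cref{rem:Lp-Mink}(i): with $K_N=0$ and no genuine $x$-dependence, \eqref{Flow-Main-1} fails and \Cref{thm:Lp-Mink} does not follow from \Cref{Flow-Main}. (Incidentally, the flow \eqref{Flow-Main-2} you invoke is the \emph{expanding} flow, not the contracting one.)

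The paper instead runs the contracting flow $\Phi(F)=F$, $\phi=f=s^{1-q}\p(\nu)$, and extracts the lower curvature bound from a different term in the evolution of $B$. Using inverse concavity and \Cref{Ev-b} one finds $\cL B\leq f_s\,B+C+\text{gradient terms}$ with $f_s=(1-q)s^{-q}\p$; since $q>2$ this coefficient is strictly negative and bounded away from zero on the barrier region, which gives $\ka_1\geq c>0$ directly from the data, without any perturbation. The upper curvature bound then follows from $F\leq f\leq C$ combined with $F_{\ast|\del\G_{+}}=0$, and higher regularity via the Gauss map parametrization as in \Cref{pf:Flow-Main}. The point is that the good sign comes from the $s$-derivative of $f$, not from any $x$-Hessian; your reduction to \Cref{Flow-Main} discards exactly this structure.
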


\begin{rem}\label{rem:Lp-Mink}~
\enum{
\item Unlike \Cref{KN=1}, when $K_N = 0$, the lack the background curvature term means \eqref{Flow-Main-1} is not satisfied and \Cref{thm:Lp-Mink} does not follow from \Cref{Flow-Main}.

\item These last two results are formulated for $1$-homogeneous $F$; hence one has to transform the parameters to get the formulation \eqref{Lp-Mink} which then holds for $p = n(q-1) + 1 > n+1$.

\item For the case $K_{N}<0$, our approach does not seem to yield an existence result due to a bad zero order term. In \Cref{Flow-Main}, this bad zero order term was compensated using the Hessian of $\phi$ with respect to $x$, cf., \eqref{Flow-Main-1}. However in the $L_p$-Minkowski problem, such a term is not present and hence we do not obtain an existence result of this kind in the hyperbolic space.
}
\end{rem}

\subsection{Generalized \texorpdfstring{$L_{p}$-Christoffel-Minkowski problem}{Lp-Christoffel-Minkowski problems}}
\label{subsec:christoffel-minkowski}

In this subsection, we give existence results under the following structural assumptions on $F$.
\begin{assum}\label{F2} Suppose
\enum{
\item $F\in C^{\8}(\G_{+})$ is a positive, strictly monotone, $1$-homogeneous function normalized to
$F(1,\dots,1)=n,$
\item $F$ is concave and inverse concave.
}
\end{assum}
Note that compared to \Cref{F1}-(ii) we dropped
\eq{\label{zero boundary}F_{\ast|\del\G_{+}}=0,}
but added the concavity assumption on $F$.

For the previous results, \eqref{zero boundary} is crucial in obtaining upper curvature bounds from the lower curvature bounds and the upper bound on $F$ (see \Cref{pf:Flow-Main} below).
In view of the counterexamples of \cite[Thm.~1.2]{GuanRenWang:/2013} and \Cref{rem:firey}, elliptic/parabolic approaches based on a priori estimates \textit{do not lead} to a solution when $F_{\ast|\del\G_{+}}\neq0$, unless further assumptions on $\varphi$ are imposed.

A class of curvature functions that does not satisfy \eqref{zero boundary} is
\eq{F=\br{\fr{\s_{n}}{\s_{n-k}}}^{\fr 1k}~\hbox{for}~1\leq k<n.}
The corresponding problem is the {\it{$L_{p}$-Christoffel-Minkowski problem}}
\eq{\label{CMP}\p s^{1-p}\s_{k}(\ka_{i}^{-1})=c.}
This problem has been studied in \cite{Firey:/1968,Firey:12/1970,GuanMa:03/2003,GuanXia:04/2018,HuMaShen:10/2004}. In the following remark, we will discuss a natural $C^2$ condition that we may impose on $\varphi$ in order to find a strictly convex solution of \eqref{CMP}.

\begin{rem}\label{rem:firey}
Suppose $1< k<n$. The regular Christoffel-Minkowski problem asks for necessary and sufficient conditions on a positive function $\psi\in C^{\infty}(\bbS^{n})$ in order for $\psi$ to be the $\sigma_k(\kappa_i^{-1})$ of a strictly convex body, i.e., this is equation \eqref{CMP} with $p=1$ and $\p=1/\psi$. Regarding this case, we will now recall Firey's necessary and sufficient conditions \cite{Firey:12/1970} in the class of rotationally symmetric data and also Guan-Ma's sufficient condition \cite{GuanMa:03/2003} for the general data. Then we will see how the two are related.

We say a function $\psi$ defined on the unit sphere is rotationally symmetric if
\eq{\psi(x_1,\ldots,x_{n+1})=\psi(\theta),\quad x_{n+1}=\sin\theta,\quad\theta\in \left[-\frac{\pi}{2},\frac{\pi}{2}\right].}
 Note that $\theta$ is the angle that the vector from the origin to $(x_1,\ldots,x_{n+1})$ makes with $x_{n+1}=0.$ Firey has found that necessary and sufficient conditions for a function $\psi\in C^{0}(\bbS^{n})$ to admit a strictly convex solution to this problem is that in some coordinates on $\bbS^{n}$, $\psi$ is a function of the latitude $\theta$ alone, and that
\enum{
  \item $\psi$ is continuous and has finite limits as $\theta$ tends to $\pm\frac{\pi}{2}$,
  \item $\int_{\theta}^{\frac{\pi}{2}}\psi(\alpha)\cos^{n-1}\alpha\sin\alpha d\alpha>0$ and zero for $\theta=-\frac{\pi}{2},$
  \item $\psi(\theta)>\frac{n-k}{\cos^n\theta}\int_{\theta}^{\frac{\pi}{2}}\psi(\alpha)\cos^{n-1}\alpha\sin\alpha d\alpha.$
  }
In the general case, using PDE methods, a remarkable sufficient condition (but not necessary) was obtained in \cite{GuanMa:03/2003} which is based on their constant rank theorem: Suppose
\eq{\int_{\bbS^{n}}u\psi(u)d\sigma_{\bbS^{n}}=0\quad \text{and}\quad \ti D^2\psi^{-\frac{1}{k}}+\psi^{-\frac{1}{k}}\ti g\geq 0.} Then a strictly convex solution exists. Here $d\sigma_{\bbS^{n}}$ is the $n$-dimensional Hausdorff measure on $\bbS^{n}.$ The constant rank theorem ensures that if the latter matrix is non-negative definite, a convex solution is in fact strictly convex. Therefore it is sufficient to find a convex solution, for example by the continuity method or by a curvature flow approach.

Guan and Ma gave a ``natural technical explanation" of their sufficient condition. Here, through the lens of maximum principle, we provide a clear link between the constant rank theorem in the rotationally symmetric case and Firey's necessary and sufficient conditions. Let $s$ be a (spherical) convex solution. For $\theta\in (-\pi/2,\pi/2)$, define
\eq{
G(\theta)=\psi(\theta)-\frac{n-k}{\cos^n\theta}\int_{\theta}^{\frac{\pi}{2}}\psi(\alpha)\cos^{n-1}\alpha\sin\alpha d\alpha.
}
The subscript $\theta$ and superscript $'$ will denote the derivative with respect to $\theta.$

From the identity (cf., \cite{Firey:12/1970})
\eq{\label{xYz}G(\theta)=\binom{n-1}{k-1}(s''(\theta)+s(\theta))(s(\theta)-s'(\theta)\tan\theta)^{k-1},}
it follows that $G\geq 0$ and that $s$ is the support function of a strictly convex hypersurface if and only if $G>0$. We prove the latter using the strong maximum principle provided $\ti D^2\psi^{-\frac{1}{k}}+\psi^{-\frac{1}{k}}\ti g$ is non-negative definite. We calculate
\eq{
G_{\theta}&=\psi_{\theta}-n(n-k)\frac{\sin\theta}{\cos^{n+1}\theta}\int_{\theta}^{\frac{\pi}{2}}\psi(\alpha)\cos^{n-1}\alpha\sin\alpha d\alpha\\
			&\hp{=}+(n-k)\psi\tan\theta\\
&=\psi_{\theta}+nG\tan\theta-k\psi\tan\theta
}
and

\eq{G_{\theta\theta}=\psi_{\theta\theta}+n(G\tan\theta)_{\theta}-k\psi_{\theta}\tan\theta-k\psi(1+\tan^{2}\theta).}
Since $G\geq 0$ and $G(\pm \pi/2)>0,$ at any $\theta_{\star}$ with $G(\theta_{\star})=0$ we have
\eq{k\tan\theta_{\star}=\fr{\psi_{\theta}}{\psi}}
and hence
\eq{0\leq G_{\theta\theta}= \left(\psi_{\theta\theta}-\frac{k+1}{k}\frac{\psi_\theta^2}{\psi}-k\psi\right)\Big|_{\theta_{\star}}\leq 0.}
By the strong maximum principle $G\equiv 0$ which is a contradiction.


Now we find a similar sufficient condition in the rotationally symmetric case for the equation \eq{ s^{1-p}\s_{k}(\ka_{i}^{-1})=c\psi,\quad p>1.}
On the interval $-\pi/2<\theta<\pi/2,$ define
\eq{G(\theta)=s^{p-1}(\theta)\psi(\theta)-\frac{n-k}{\cos^n\theta}\int_{\theta}^{\frac{\pi}{2}}\psi(\alpha)s^{p-1}(\alpha)\cos^{n-1}\alpha\sin\alpha d\alpha.}
Again we need to show that $G> 0$. By a direct calculation,
\eq{
G_{\theta\theta}-(n-k)G_{\theta}\tan\theta
&=(n(k+1)\tan^2\theta+n)G+(\psi s^{p-1})_{\theta\theta}-k\psi s^{p-1}\\
&\hp{=}-\frac{k+1}{\psi s^{p-1}}((\psi s^{p-1})_\theta+nG\tan\theta-G_{\theta})^2.
}
Moreover, we have
\eq{
(\psi s^{p-1})_{\theta\theta}-&\frac{k+1}{k}\frac{(\psi s^{p-1})_{\theta}^2}{\psi s^{p-1}}-k\psi s^{p-1}\\
&=\left(\psi_{\theta\theta}-\frac{p+k}{p+k-1}\frac{\psi_\theta^2}{\psi}-(p+k-1)\psi\right)s^{p-1}\\
&\hp{=}-\frac{p-1}{k}\left(\frac{1}{\sqrt{p+k-1}}\frac{\psi_{\theta}}{\psi}+\sqrt{p+k-1}\frac{s_{\theta}}{s}\right)^2\psi s^{p-1}\\
&\hp{=}+(p-1)(s_{\theta\theta}+s)\psi s^{p-2}.
}
Now if $G$ attained zero at some point $\theta_{\star}$, then $(s_{\theta\theta}+s)\big|_{\theta_{\star}}=0$ and the right-hand side of this last equation would be non-positive provided
\eq{\ti D^2\psi^{-\frac{1}{p+k-1}}+\psi^{-\frac{1}{p+k-1}}\ti g\geq 0.} Hence by the strong maximum principle we would have a contradiction.
\end{rem}

\begin{rem}
By the above remark it appears for $p<1$ the condition
\eq{\ti D^2\psi^{-\frac{1}{p+k-1}}+\psi^{-\frac{1}{p+k-1}}\ti  g\geq 0}
does not ensure the strictly convexity of a solution to \eqref{CMP}.
\end{rem}
Now we state the conditions on $\p$ under which results like \Cref{KN=1} and \Cref{thm:Lp-Mink} are still valid for a wider class of curvature functions. 

\begin{defn}
Define
\eq{\G_{\e}=\{\kappa=(\ka_{i})\in \G_{+}\cn \ka_{i}\geq \e\quad\fa i\}.}
We say $F$ is in the class $\Lambda_{\varepsilon}$, if for every $\varepsilon$ there exists $C_{\varepsilon}$ such that
\eq{\ka\in \G_{\e}\quad\Ra\quad \fr{\del F}{\del \ka_{i}}\ka_{i}^{2}\leq C_{\e}F^{\g},}
where $\gamma$ is a constant which may depend on $F$.
\end{defn}

\begin{thm}\label{thm:Lp-CMP}
Let $N=\bbE^{n+1}$, $q>2$ and $\p\in C^{\8}(\bbS^{n})$ be positive.
Suppose \Cref{F2} holds. Suppose either
\enum{
\item $F$ is in the class $\Lambda_{\varepsilon}$ and \eq{\ti D^{2}\br{\p^{\fr 1q}}+\p^{\fr 1q}\ti g>0,}
\item or \eq{\ti D^{2}\br{\p^{\fr 1q}}+\fr{q-1}{q}\p^{\fr 1q}\ti g>0.}
}
Then there exists a strictly convex solution to
\eq{F=s^{1-q}\p(\nu).}
\end{thm}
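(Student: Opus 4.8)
The plan is to run the expanding-type flow \eqref{Flow-Main-2} with $\phi=-f^{-1}$ and $f=s^{1-q}\p(\nu)$ in the Euclidean setting, exactly as in the scheme behind \Cref{Flow-Main} and \Cref{thm:Lp-Mink}, but now accommodating the loss of the boundary condition \eqref{zero boundary}. First I would reformulate everything in terms of the support function $s$ on $\bbS^{n}$: since $F$ is $1$-homogeneous and inverse concave, $F_{\ast}$ (expressed in the principal radii $r_i=\ka_i^{-1}$) is concave, and the equation \eqref{CMP}-type problem becomes a scalar parabolic PDE for $s$ to which the Krylov--Safonov machinery applies once $C^0$ and $C^1$ bounds and two-sided curvature bounds are in hand. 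The $C^{0}$ and $C^{1}$ estimates come from the barrier construction: because $q>2$ the scaling argument of \Cref{cor1}/\Cref{thm:Lp-Mink} produces a strictly convex inner barrier at $s$ small and a strictly convex outer barrier at $s$ large (here $f$ scales like $\la^{1-q}$ against $F\sim\la^{-1}$, and $1-q<-1$), confining the flow to a compact strict annular region; standing estimates then give uniform $C^{0}$, $C^{1}$ control and positivity of $s$, so $f$ stays smooth and bounded.

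The heart of the matter is the second-order estimate, and this is where the two alternative hypotheses on $\p$ enter. Without \eqref{zero boundary} I cannot get the upper curvature bound for free from the lower bound; instead I would run a tensorial maximum-principle argument on the second fundamental form (equivalently, on the inverse $\widetilde{h}_{ij}=(A^{-1})_{ij}$ of the Weingarten operator) along the flow. The lower curvature bound (equivalently an upper bound on the principal radii) is obtained from inverse concavity of $F$ together with the convexity gain built into the hypotheses on $\p$: when one writes the evolution of the smallest principal curvature (or largest principal radius), the zeroth-order term coming from the $s^{1-q}\p(\nu)$ right-hand side is controlled precisely by $\ti D^{2}(\p^{1/q})+c\,\p^{1/q}\ti g>0$, with $c=1$ in case (ii) and $c=0$ — but then compensated by the extra structural hypothesis — in case (i). For the \emph{upper} curvature bound, in case (i) the class $\Lambda_{\e}$ condition $\partial_i F\,\ka_i^{2}\le C_\e F^{\g}$ on $\G_\e$ is exactly what is needed so that, once we know $\ka\in\G_\e$ (from the lower bound), the "bad" gradient terms in the evolution of $\max_i\ka_i$ are absorbed and a maximum-principle argument closes; in case (ii) the stronger convexity inequality on $\p^{1/q}$ plays the dual role — one works with the support-function PDE directly and the quantity $\p^{1/q}+\tfrac{q-1}{q}$-type combination controls the relevant curvature quantity, paralleling the computation in \Cref{rem:firey} where the exponent $1/(p+k-1)$ with $p=q$, $k=0$ would formally reduce to $1/(q-1)$ — I would adapt that rotationally-symmetric calculation to the general tensor maximum principle.

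Once two-sided, time-independent bounds $0<\e\le\ka_i\le C$ are established, the rest is routine: the flow \eqref{Flow-Main-2} is uniformly parabolic with concave (in the inverse-concavity sense, after passing to the support function) principal part, so Krylov--Safonov and Schauder give uniform $C^{\infty}$ estimates, long-time existence follows, and a monotonicity/interpolation argument (the speed $\phi+F^{-1}$ has a sign-definite evolution by the barrier ordering) forces convergence to a stationary solution, which solves $F(\ka)=s^{1-q}\p(\nu)$; strict convexity is preserved because $\e$ is uniform. I expect the main obstacle to be the curvature pinching estimate in case (i): verifying that the $\Lambda_{\e}$ bound genuinely absorbs all the reaction and gradient terms in the evolution of the largest principal curvature when \eqref{zero boundary} is unavailable, and checking that the constant-rank-type convexity hypothesis on $\p^{1/q}$ is compatible with (rather than merely analogous to) the Caffarelli--Guan--Ma / Guan--Ma structure invoked in \Cref{rem:firey}.
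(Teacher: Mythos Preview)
Your overall architecture (barriers from scaling, two-sided curvature bounds, Krylov--Safonov via the Gauss-map parametrization, convergence from monotonicity) is right, but two structural points are off.

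First, the paper runs the \emph{contracting} flow $\Phi(F)=F$, $\phi=f=s^{1-q}\p(\nu)$, not the expanding flow $\phi=-f^{-1}$ you propose. With the contracting choice $\Phi''\equiv 0$, so the term $\Phi''F_{;i}{F_{;}}^{j}$ in \Cref{Ev-h} drops out; with the expanding flow $\Phi''=2F^{-3}>0$ appears with a bad sign in the upper-curvature estimate, and the Hessian terms involve $-f^{-1}$ rather than $f$, spoiling the clean structure of \Cref{Hessian}. The ``scheme behind \Cref{thm:Lp-Mink}'' that you cite already uses the contracting flow (see the first line of \Cref{pf:Lp-Mink}), so your reference is internally inconsistent.

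Second, and more substantively, you have reversed where the hypotheses on $\p$ enter. The \emph{lower} bound $\ka_{1}\geq\e$ needs neither of them: along the contracting flow the trace $B=b^{r}_{r}$ obeys
\eq{\cL B\leq (1-q)s^{-q}\p\,B+C+\text{(gradient terms)},}
and $q>2$ alone makes the linear coefficient strictly negative (inverse concavity enters only to discard third-order terms via \eqref{invers concav}). It is the \emph{upper} bound $\ka_{n}\leq C$ where cases (i) and (ii) live. At a maximum of $\ka_{n}$ one divides \eqref{pf:Lp-Mink-1} by $\ka_{n}^{2}$; the key \Cref{Hessian} packages the combined $(s,\nu)$-Hessian of $f$ into $-qf\p^{-1/q}\ti D^{2}(\p^{1/q})$, yielding
\eq{0\leq C\ka_{n}^{-1}+\ka_{n}^{-1}F^{kl}h_{rk}h^{r}_{l}-qf\p^{-1/q}\br{\ti D^{2}(\p^{1/q})+\p^{1/q}\ti g}(x_{;n},x_{;n}).}
In case (i) the $\La_{\e}$ hypothesis bounds $F^{kl}h_{rk}h^{r}_{l}$ (using $\ka\in\G_{\e}$ from the lower bound and $F\leq f\leq C$), and the strictly positive bracket finishes. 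In case (ii) one uses only $1$-homogeneity together with $F\leq f$ to get $\ka_{n}^{-1}F^{kl}h_{rk}h^{r}_{l}\leq f$; this costs an extra $+f$, which is precisely why the hypothesis tightens to coefficient $\tfrac{q-1}{q}$. Thus your assignment of the constants to the two cases is backwards, and the connection to \Cref{rem:firey} is only heuristic---the operative computation is \Cref{Hessian}, which is missing from your plan.
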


In the next lemma we will verify that the class $\La_{\e}$ contains interesting curvature functions such as quotients.

\begin{lemma}
Suppose $1\leq k<\ell\leq n$. Then $F=\left(\sigma_\ell/\sigma_k\right)^{\frac{1}{\ell-k}}\in \Lambda_{\varepsilon}.$
\end{lemma}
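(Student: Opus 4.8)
The plan is to compute the weighted sum $\sum_i\,(\del F/\del\ka_i)\,\ka_i^2$ in closed form and then bound it on $\G_\e$ by a power of $F$. Since $F=G^{1/(\ell-k)}$ with $G=\sigma_\ell/\sigma_k$, I would proceed by logarithmic differentiation: $\del_i\log F=(\ell-k)^{-1}(\del_i\log\sigma_\ell-\del_i\log\sigma_k)$. A short computation using the standard identities for elementary symmetric functions gives, for each $m$, $\sum_i(\del\sigma_m/\del\ka_i)\,\ka_i^2=\sigma_1\sigma_m-(m+1)\sigma_{m+1}$ (with the convention $\sigma_{n+1}=0$). Dividing by $\sigma_m$, taking $m=\ell$ and $m=k$, subtracting, and multiplying by $F$ yields the key identity
\[
\sum_i \fr{\del F}{\del\ka_i}\,\ka_i^2=\fr{F}{\ell-k}\br{(k+1)\fr{\sigma_{k+1}}{\sigma_k}-(\ell+1)\fr{\sigma_{\ell+1}}{\sigma_\ell}}.
\]
Because $F$ is positive, strictly monotone, smooth and $1$-homogeneous on $\G_+$, the left-hand side is a sum of nonnegative terms, so it suffices to bound the whole sum; dropping the nonnegative term $(\ell+1)\sigma_{\ell+1}/\sigma_\ell$ on the right gives
\[
\sum_i \fr{\del F}{\del\ka_i}\,\ka_i^2\leq\fr{(k+1)F}{\ell-k}\cdot\fr{\sigma_{k+1}}{\sigma_k}.
\]

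It then remains to bound $\sigma_{k+1}/\sigma_k$ on $\G_\e$ by a power of $F$. I would factor $\sigma_{k+1}/\sigma_k=(\sigma_{k+1}/\sigma_\ell)\,F^{\ell-k}$, so that only $\sigma_{k+1}/\sigma_\ell$ needs controlling. The Newton inequality $\sigma_{j-1}\sigma_{j+1}\leq\sigma_j^2$ on $\G_+$ shows that $j\mapsto\sigma_{j+1}/\sigma_j$ is nonincreasing, while on $\G_\e$ one has $\sigma_n/\sigma_{n-1}=(\sum_i\ka_i^{-1})^{-1}\geq\e/n$; hence each of the $\ell-k-1$ factors in the telescoping product $\sigma_\ell/\sigma_{k+1}=\prod_{j=k+1}^{\ell-1}(\sigma_{j+1}/\sigma_j)$ is $\geq\e/n$, so that $\sigma_{k+1}/\sigma_\ell\leq(n/\e)^{\ell-k-1}$. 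Combining with the previous display,
\[
\sum_i \fr{\del F}{\del\ka_i}\,\ka_i^2\leq \fr{k+1}{\ell-k}\br{\fr{n}{\e}}^{\ell-k-1}F^{\ell-k+1}\qquad\text{on }\G_\e,
\]
that is, $F\in\La_\e$ with $\g=\ell-k+1$ and $C_\e=(k+1)(\ell-k)^{-1}(n/\e)^{\ell-k-1}$.

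The argument is purely algebraic, so I do not expect a serious obstacle; the one point worth flagging is that the naive estimate $\sum_i(\del_iF)\,\ka_i^2\leq\ka_n F$ coming from $1$-homogeneity is too weak here — already for $F=\sigma_2/\sigma_1$ and $\ka=(\e,\dots,\e,R)$ the largest principal curvature $R$ is unbounded while $F$ stays bounded, so $\ka_n$ is not dominated by any power of $F$. The resolution is to keep the exact identity and estimate $\sigma_{k+1}/\sigma_k$ not by $\sigma_1$ (which is comparable to $\ka_n$) but by $(\sigma_{k+1}/\sigma_\ell)\,F^{\ell-k}$; it is precisely the comparison of $\sigma_{k+1}$ with $\sigma_\ell$ on $\G_\e$ — where the lower bound on $\sigma_n/\sigma_{n-1}$ brings in the dependence on $\e$ — that turns the right-hand side into a genuine power of $F$ once $\ell>k+1$.
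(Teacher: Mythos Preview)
Your proof is correct and follows essentially the same route as the paper: both compute $\sum_i F^{ii}\ka_i^2$ via logarithmic differentiation to obtain the identity $\sum_i F^{ii}\ka_i^2=\tfrac{F}{\ell-k}\bigl((k+1)\tfrac{\s_{k+1}}{\s_k}-(\ell+1)\tfrac{\s_{\ell+1}}{\s_\ell}\bigr)$, drop the second term, and then telescope $\s_{k+1}$ up to $\s_\ell$ on $\G_\e$ to arrive at $\g=\ell-k+1$. The only cosmetic difference is how the ratio bound $\s_{j+1}/\s_j\geq c(\e)$ is obtained: the paper uses the identity $\s_p=\s_p(\ka|i)+\ka_i\s_{p-1}(\ka|i)$ directly to get $(n-p+1)\s_{p-1}\leq\tfrac{n}{\e}\s_p$, while you invoke Newton's inequality to reduce to $\s_n/\s_{n-1}\geq\e/n$.
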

\begin{proof}
We define
\eq{(\kappa|i)=(\kappa_1,\ldots,\kappa_{i-1},\kappa_{i+1},\ldots,\kappa_n)}
and also write
\eq{F^{ii}=\fr{\del F}{\del\ka_{i}}.}
There holds
\eq{\label{elemntary relation}
\sigma_p&=\sigma_p(\kappa|i)+\kappa_i\sigma_{p-1}(\kappa|i)=\sigma_p(\kappa|i)+\kappa_i\sigma_{p}^{ii}.}
Hence
\eq{(\ell-k)F^{ii}\ka_{i}^{2}=F\br{\fr{\s_{\ell}^{ii}\ka_{i}^{2}}{\s_{\ell}}-\fr{\s_{k}^{ii}\ka_{i}^{2}}{\s_{k}}}=F\br{\fr{(k+1)\s_{k+1}}{\s_{k}}-\fr{(l+1)\s_{l+1}}{\s_{l}}}.}
Due to \eqref{elemntary relation}, we have on $\G_{\e}$:
\eq{\frac{n}{\varepsilon}\sigma_p\geq \sum_i\sigma_{p-1}(\kappa|i)=(n-p+1)\sigma_{p-1}.}
Hence
\eq{F^{ii}\ka_{i}^{2}\leq C_{k,\ell}\fr{\s_{k+1}}{\s_{k}}F\leq C_{n,k,\ell,\e}\fr{\s_{\ell}}{\s_{k}}F=C_{n,k,\ell,\e}F^{\ell-k+1}.}
Thus $F\in \La_{\e}$ with $\g=\ell-k+1$.
\end{proof}

\begin{rem}
By the previous lemma, Condition (i) in \Cref{thm:Lp-CMP} is satisfied for the quotients
\eq{F=\br{\fr{\s_{\ell}}{\s_{k}}}^{\frac{1}{\ell-k}},}
while \Cref{F2} holds as well; see \cite[p.~23]{Andrews:/2007}.
Hence this theorem provides a generalization of \cite[Thm.~1.1]{Ivaki:02/2019} within the range $q>2$ and of \cite[Thm.~ 1.4]{GuanMaZhou:09/2006} to an important class of curvature functions (which no variational structures are available).
\end{rem}

\begin{thm}\label{thm:Lp-CMP-KN}
Let $N=\bbS^{n,1}$ and $F$ satisfy \Cref{F2}. Suppose $q<0$ and $0<\p\in C^{\8}(\bbH^{n+1})$ is a bounded function such that
\eq{\label{desitter} \p^{\fr 1q}\ti g-\ti D^{2}\br{\p^{\fr 1q}}>0.}
Then there exists a strictly convex solution to
\eq{F=cs^{1-q}\p(\nu)~\mbox{for a positive constant}~c.}
\end{thm}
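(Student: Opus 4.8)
The plan is to run the expanding-type curvature flow \eqref{Exp} in the de Sitter space $\bbS^{n,1}$ and show long-time existence and convergence to a steady state, which by construction solves $F = cs^{1-q}\p(\nu)$. First I would exploit the duality $\ti{\bbS}^{n,1} = \bbH^{n+1}$: a strictly convex spacelike hypersurface in $\bbS^{n,1}$ has a Gauss map into $\bbH^{n+1}$, and the whole problem can be transported to a dual problem for a convex hypersurface in $\bbH^{n+1}$, where $\p$ naturally lives (this is why the hypothesis places $\p\in C^\8(\bbH^{n+1})$). Under this polar/Legendre-type transform, $F=(\s_\ell/\s_k)^{1/(\ell-k)}$ and more generally any $F$ satisfying \Cref{F2} (concave and inverse concave) gets exchanged with its inverse $F_\ast$, which is again concave and inverse concave, so the structural hypotheses are preserved. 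The support function $s$ on the $\bbS^{n,1}$-side becomes (a function of) the radial/height coordinate on the $\bbH^{n+1}$-side, and the condition $q<0$ together with \eqref{desitter} is exactly what is needed so that the dual speed $\phi = \Phi(f)$ satisfies a convexity inequality of the form \eqref{Flow-Main-1} but with the opposite (hyperbolic) sign of $K_N$ — here $K_{\bbH^{n+1}}=-1$, and the inequality $\p^{1/q}\ti g - \ti D^2(\p^{1/q})>0$ is precisely $\bar D^2 \phi + K\phi\bar g < 0$ after accounting for the sign flip coming from $q<0$ when passing from $\p$ to $f = c s^{1-q}\p$.

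Second, having reduced to a flow in $\bbH^{n+1}$ (or equivalently in $\bbS^{n,1}_+$ with the convexity structure of \Cref{F1}-(iv) — note $\bbS^{n,1}$ is the dual of $\bbH^{n+1}$, so the Lorentzian case of \Cref{Flow-Main} is literally the relevant input), I would verify the barrier hypotheses. Choosing $c$ small enough, geodesic spheres of two radii $a<b$ serve as lower and upper barriers: since $F$ is $1$-homogeneous and normalized with $F(1,\dots,1)=n$, on a geodesic sphere of radius $r$ in the relevant spaceform one has $F(\bar\ka_i) = n\vt'(r)/\vt(r)$, and comparing this against $c\,s^{1-q}\p$ — where $s = \vt/v = \vt$ on a sphere and $\p$ is bounded by hypothesis — the sign conditions \eqref{lower barrier}, \eqref{upper barrier} hold for suitable $a,b$ because $q<0$ makes $s^{1-q}=s^{1+|q|}$ vanish at the inner radius and blow up, while $\vt'/\vt$ has the opposite monotonicity; boundedness of $\p$ from above and below is used here. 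Then \Cref{Flow-Main} (Lorentzian case, starting from the upper barrier) applies verbatim and yields a smooth family of embeddings converging to a strictly convex solution of $F(\ka)=cs^{1-q}\p(\nu)$ on the dual side, which transports back to the desired solution in $\bbS^{n,1}$.

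The main obstacle I anticipate is making the duality transform fully rigorous at the level of the flow, not just the elliptic endpoint: one must check that the a priori estimates (in particular the $C^2$, i.e.\ curvature, estimates and the uniform ellipticity/obliqueness needed for Krylov--Safonov or Evans--Krylov) are genuinely available on the $\bbH^{n+1}$-side, where $F$ has been replaced by $F_\ast$ and \Cref{F1}-(ii) ($F_\ast$ inverse concave with $(F_\ast)_\ast = F$ vanishing on $\del\G_+$ when $F\ge H$ by concavity) must be invoked. Concretely, the lower curvature bound comes from inverse concavity and the upper bound from the vanishing of the inverse on $\del\G_+$, exactly as in the proof of \Cref{Flow-Main}; since \Cref{F2} drops \eqref{zero boundary} but here we are in the hyperbolic/de Sitter setting where convexity of $F$ (hence $F\ge H$, hence $F_\ast|_{\del\G_+}=0$) is not assumed, the bad zero-order term flagged in \Cref{rem:Lp-Mink}(iii) must instead be absorbed using the Hessian hypothesis \eqref{desitter} — verifying that \eqref{desitter} supplies exactly the right amount of negativity to close the $C^2$ estimate is the crux, and it is the $\bbS^{n,1}$ analogue of how \eqref{Flow-Main-1} is used in \Cref{Flow-Main}. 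Once the a priori estimates are in hand, long-time existence is standard parabolic theory and convergence follows from monotonicity of $s$ (or of an auxiliary quantity) along the flow together with the barriers trapping the hypersurfaces in the fixed annular region.
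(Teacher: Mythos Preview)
Your plan diverges from the paper's proof in every structural choice, and the reduction to \Cref{Flow-Main} you propose does not go through. The paper runs the \emph{contracting} flow $\Phi(F)=F$ directly in $\bbS^{n,1}$, starting from a lower barrier; no duality to $\bbH^{n+1}$ is invoked for the a~priori estimates. Your attempt to appeal to \Cref{Flow-Main} as a black box fails on the hypotheses: the Lorentzian case of \Cref{Flow-Main} requires \Cref{F1}-(iv), i.e.\ $F$ convex, whereas \Cref{F2} assumes $F$ concave; and if you dualize to $\bbH^{n+1}$ the curvature function becomes $F_{\ast}$, so \Cref{F1}-(ii) would demand $(F_{\ast})_{\ast|\del\G_{+}}=F_{|\del\G_{+}}=0$, which is again not assumed. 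You notice this obstruction in your final paragraph but do not resolve it; that is the gap.

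More importantly, your reading of \eqref{desitter} as an $x$-convexity condition playing the role of \eqref{Flow-Main-1} after duality is off the mark. In the paper, $\p$ depends on $\nu\in\bbH^{n+1}$, and \eqref{desitter} is consumed \emph{directly} in the evolution of $h^{j}_{i}$ via the key estimate of \Cref{Hessian}: since $q<0$ gives $q(q-1)>0$, that lemma bounds the combined $(s,\ti x)$-Hessian block
\eq{-\vt^{2}f_{ss}r_{;n}^{2}-2\vt f_{s\ti x}(x_{;n})r_{;n}-f_{\ti x\ti x}(x_{;n},x_{;n})\leq -qs^{1-q}\p^{1-\fr 1q}\ti D^{2}(\p^{\fr 1q})(x_{;n},x_{;n}),}
and \eqref{desitter} converts the right-hand side into a strictly negative multiple of $f$ that absorbs the bad $+qf$ term arising from $f_{s}s$. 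This yields the \emph{upper} curvature bound first---note the order is reversed relative to the Euclidean arguments. The lower bound on $\ka_{1}$ is then obtained from the evolution of $B=\tr(b)$ with the auxiliary function $\log B+\al(\vt')$, using the now-available bound $F\leq n\ka_{n}\leq C$ together with the barrier inequality $F\geq f\geq c>0$. So the ``crux'' you flag---that \eqref{desitter} must supply exactly the right negativity to close the $C^{2}$ estimate---is precisely the content of \Cref{Hessian}, and it acts on the $\ti x$-side of the evolution equation, not through any position-variable convexity after dualization.
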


\begin{rem}
We conclude this section with two remarks on the prescribed curvature problem.
\enum{
\item For $q=1$, $F=\sigma_k^{\frac{1}{k}}$ and $N=\bbS^{n,1}$, an existence result in the class of starshaped hypersurfaces was obtained recently in \cite{Ballesteros-ChavezKlingenbergLambert:08/2019}.  The reader may consult \cite{AndradeBarbosaDe-Lira:/2009,BarbosaDeLiraOliker:/2002,LiOliker:/2002,LiSheng:09/2013,SpruckXiao:05/2015} as well as \cite{Gerhardt:02/1997,Gerhardt:02/2006} regarding starshaped solutions resp. convex solutions to some classes of prescribed curvature problems in non-Euclidean ambient spaces.
\item In contrast with the Euclidean space, the Christoffel problem ($q=1$ and $F=\sigma_1(\kappa_i^{-1})$) in the hyperbolic space is a nonlinear problem.
In this case, a sufficient condition was obtained in \cite{Oliker:06/1992} employing a certain duality between the hyperbolic space and de Sitter space.
}
\end{rem}

\section{Evolution equations}\label{sec:Ev}

In this section we calculate the evolution equations for the flow
\eq{\dot{x}=\s(\phi(s,x,\ti x)-\Phi)\ti x,}
where
\eq{x\cn [0,T)\x \bbS^{n}\ra N\hra \bbR^{n+2}_{\mu}}
is viewed as a codimension $2$ embedding into Euclidean or Minkowski space, if $K_{N}\neq 0$. Here we have replaced $\nu$ by $\ti x$ to indicate that we are viewing the image of the Gauss map $\nu(\bbS^{n})$ as a closed, connected, strictly convex hypersurface in the dual space; see \cite[Ch.~9, 10]{Gerhardt:/2006}. Since $x$ and $\ti x$ range in $N$ and $\ti N$ respectively, for every $s$ we might as well extend $\phi(s,\cdot,\cdot)$ to a 0-homogeneous function:
\eq{\phi(s,x,\ti x)=\phi\br{s,\fr{x}{\abs{\abs{x}_{\mu}}},\fr{\ti x}{\abs{\abs{\ti x}_{\mu}}}}.}
When $N=\bbE^{n+1},$ we only extend $f$ with respect to $\ti x=\nu$.

For the partial derivatives of $\phi$ we use subscripts:
\eq{\phi_{s}=D_{s}\phi,\quad \phi_{x}=D_{x}\phi,\quad \phi_{\ti x}=D_{\ti x}\phi.}

For basic properties of curvature functions, the reader may consult \cite[Ch.~2]{Gerhardt:/2006} or \cite{Scheuer:06/2018}. In order to differentiate a curvature function $F$, it is convenient to view it as a function of the Weingarten operator and also as a function of the second fundamental form and the metric,
\eq{F=F(h^{i}_{j})=F(h_{ij},g_{ij}).}
We will write
\eq{F^{ij}=\fr{\del F}{\del h_{ij}},\quad F^{i}_{j}=\fr{\del F}{\del h^{j}_{i}}.}
Frequently we will use the linearized operator
\eq{\mathcal{L}:=\partial_t-\Phi'F^{ij}\nabla_i\nabla_j,}
and also use the relation
\eq{\s \bar g(\del_{r},x_{;k})=\bar g(\bar D r,x_{;k})=dr(x_{;k})=r_{;k}.}

\begin{lemma}\label{ev of speed}
There holds
\eq{\mathcal{L}(\phi-\Phi)=&\br{\s \Phi' F^{ij}h_{ik}h^{k}_{j}+ K_{N}\Phi' F^{ij}g_{ij}}(\phi-\Phi)\\
			&+\br{\s \phi_{x}(\ti x)+\s \phi_{s}\vt'-\abs{K_{N}}\ti \s \phi_{\ti x}(x)}(\phi-\Phi)\\
			&-\vt \phi_{s}(\phi-\Phi)_{;i}{r_{;}}^{i}- \phi_{\ti x}(x_{;j})(\phi-\Phi)_{;i}g^{ij},}
where $\ti \s=\bar g(\ti \nu,\ti\nu)=\ip{x}{x}_{\mu}$.
\end{lemma}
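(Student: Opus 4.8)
The plan is to differentiate the speed $\phi - \Phi$ along the flow and commute derivatives, treating the two summands $\phi$ and $\Phi$ separately. First I would compute $\partial_t \Phi$ and $\Phi' F^{ij}\nabla_i\nabla_j \Phi$ using the standard evolution equations for the second fundamental form under a flow of the form $\dot x = \s(\phi - \Phi)\nu$; these are catalogued in \cite[Ch.~2]{Gerhardt:/2006}. The key input is the Simons-type identity: when one computes $\mathcal{L}\Phi = (\partial_t - \Phi' F^{ij}\nabla_i\nabla_j)\Phi$, the commutation of $\nabla_i\nabla_j$ past the Weingarten map produces, via the Gauss equation, precisely the curvature terms $\s\Phi' F^{ij}h_{ik}h^k_j$ and $K_N \Phi' F^{ij}g_{ij}$ multiplied by the speed $(\phi - \Phi)$, plus gradient terms that will cancel against the Hessian action on $\phi$. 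This is the algebraic heart of the computation and the step most prone to sign errors, especially since one must keep track of the signature $\s$ (and $\tilde\s$) carefully in both the Riemannian and Lorentzian cases, where the Gauss equation differs only in $\bar g(\nu,\nu)$.

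Next I would handle $\partial_t \phi$, where $\phi = \phi(s, x, \tilde x)$ depends on the support function, the position, and the Gauss image. By the chain rule $\partial_t \phi = \phi_s \dot s + \phi_x(\dot x) + \phi_{\tilde x}(\dot{\tilde x})$. Here I would invoke the evolution equations for $s$, for the position $x$ (which is simply $\dot x = \s(\phi-\Phi)\nu$), and for the Gauss image $\tilde x = \tilde\nu$ under the flow; the evolution of $\tilde x$ is governed by the dual structure described in \cite[Ch.~9,10]{Gerhardt:/2006} and contributes the factor $|K_N|$ together with the appropriate tangential gradient of the speed. Similarly $\dot s$ produces the $\vt'$ term and a gradient term $\vt (\phi-\Phi)_{;i} r^{;i}$ via \eqref{support}, \eqref{v}, and the relation $\s\bar g(\del_r, x_{;k}) = r_{;k}$. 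Then I would apply the operator $-\Phi' F^{ij}\nabla_i\nabla_j$ to $\phi$; since $\phi$ is a function on $N \times \tilde N$ composed with the maps $x$ and $\tilde x$, its Hessian along $\Si$ splits into a term involving $\bar D^2 \phi$ (which does \emph{not} appear in the final formula because it has been absorbed into the definition of the quantities we are tracking, or more precisely it is handled separately in the applications) and terms coming from $\nabla^2 x = -\s h \nu$ and $\nabla^2 \tilde x$. The terms where the two derivatives both hit the ``outer'' $\phi$ combine with the first-order pieces to reproduce $\s\phi_x(\tilde x) + \s\phi_s\vt' - |K_N|\tilde\s \phi_{\tilde x}(x)$ times $(\phi - \Phi)$, while the mixed terms where one derivative falls on $\phi_s$, $\phi_x$, or $\phi_{\tilde x}$ and the other on the geometric data produce the final two gradient terms.

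The main obstacle I anticipate is bookkeeping: correctly tracking how the normal $\nu$ and its dual $\tilde x$ evolve, and making sure the numerous signature factors ($\s$, $\mu$, $\tilde\s$, $K_N$, and $|K_N|$) assemble into exactly the stated coefficients in all four ambient geometries simultaneously. In particular, the appearance of $|K_N|$ rather than $K_N$ in the $\phi_{\tilde x}$ term, and the factor $\tilde\s = \ip{x}{x}_\mu$, must come out of the identification of $\nu$ with a point in the dual hyperquadric and the Weingarten relation for the Gauss map; getting these constants right is where the real care is needed. The remaining gradient terms $-\vt\phi_s(\phi-\Phi)_{;i} r^{;i} - \phi_{\tilde x}(x_{;j})(\phi-\Phi)_{;i} g^{ij}$ arise naturally once one notes that $\nabla_i s$ and $\nabla_i \tilde x$ are expressible through $r_{;i}$ and $x_{;j}$ respectively, and that the first-order terms from $\partial_t\phi$ pair up with the cross terms from the Hessian to leave only these two expressions. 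Once the computation is organized this way, it is a routine—if lengthy—verification.
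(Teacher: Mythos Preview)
Your overall plan would eventually reach the answer, but it is considerably more laborious than the paper's route and your account of the cancellations is not quite right. The key simplification you are missing is that one should \emph{not} split $\mathcal{L}(\phi-\Phi)$ into $\mathcal{L}\phi - \mathcal{L}\Phi$ and attack each piece with a Simons-type commutation. Instead, the evolution of the Weingarten map under $\dot x = \s(\phi-\Phi)\nu$ (from \cite[Lem.~2.3.3]{Gerhardt:/2006}) reads
\eq{\dot h^j_i = -(\phi-\Phi)_{;i}{}^j - \s(\phi-\Phi)h_{ik}h^{kj} - K_N(\phi-\Phi)\de^j_i,}
so that $\del_t(\phi-\Phi) = \del_t\phi - \Phi' F^i_j\dot h^j_i$ already contains the term $+\Phi'F^{ij}(\phi-\Phi)_{;ij}$. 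This cancels \emph{identically} against the elliptic part of $\mathcal{L}$, leaving only $\del_t\phi$ together with the zero-order pieces $\s\Phi'F^{ij}h_{ik}h^k_j(\phi-\Phi) + K_N\Phi'F^{ij}g_{ij}(\phi-\Phi)$. No Simons identity, no $F^{kl,rs}$ or $\Phi''$ terms, and no computation of $\n^2\phi$ are needed. In particular, your remark that $\bar D^2\phi$ is ``absorbed'' or ``handled separately in the applications'' is not the correct explanation: the Hessian of $\phi$ simply never enters this lemma, because it disappears as part of the Hessian of the full speed $(\phi-\Phi)$ before one ever expands it. (The expansion of $\phi_{;ij}$ is only carried out later, in the evolution of $h^j_i$ itself.)

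What remains is purely $\del_t\phi$, and here your chain-rule outline is correct. The paper obtains $\dot{\ti x} = \abs{K_N}\ti\s(\Phi-\phi)x + g^{ij}(\Phi-\phi)_{;i}x_{;j}$ from the dual flow picture and $\dot s = \s\vt'(\phi-\Phi) - \s(\phi-\Phi)_{;i}\bar g(\vt\del_r,x_{;j})g^{ij}$ from the conformal Killing property \eqref{Conf} of $\vt\del_r$; substituting these into $\del_t\phi = \phi_x(\dot x) + \phi_s\dot s + \phi_{\ti x}(\dot{\ti x})$ gives the second, third and fourth lines of the stated formula directly. So the signature bookkeeping you anticipate at the end is genuine, but once the computation is organized this way it is the \emph{only} nontrivial part; the elaborate cancellations you describe between Hessian terms of $\phi$ and Simons terms of $\Phi$ are an artifact of the longer route.
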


\pf{
In case $K_{N}\neq 0$, the dual flow $\ti x$ (cf., \cite[p.~29]{BIS4}) evolves by
\eq{\dot{\ti x}=\ti \s (\Phi-\phi)x+g^{ij}(\Phi-\phi)_{;i}x_{;j},}
where we have used the Weingarten equation
\eq{\ti x_{;l}=h^{k}_{l}x_{;k}.}
In case $K_{N}=0,$ we have
\eq{\dot{\nu}=g^{ij}(\Phi-\phi)_{;i}x_{;j}.}
Hence we can combine these two cases by writing
\eq{\dot{\ti x}=\abs{K_{N}}\ti \s(\Phi-\phi)x+g^{ij}(\Phi-\phi)_{;i}x_{;j}.}
By \cite[Lem.~2.3.3]{Gerhardt:/2006} we have
\eq{\label{Ev-A-gen}\dot{h}^{j}_{i}&=-{(\phi-\Phi)_{;i}}^{j}-\sigma(\phi-\Phi)h_{ik}h^{kj}- K_N(\phi-\Phi)\delta_{i}^{j},\\
\del_{t}(\phi-\Phi)&=\del_{t}\phi-\Phi'F_j^i\dot{h}_i^j.}
Using that $\vt\del_{r}$ is a conformal Killing field, i.e.,
\eq{\label{Conf}\br{\vt \del_{r}}_{;\al}=\vt'\del_{\al},}
and \cite[Lem.~2.3.2]{Gerhardt:/2006} we calculate
\eq{\label{s-dot}\dot{s}&=\s\del_{t}\bar g(\vt\del_{r},\nu)\\
&=\s\vt'\bar g(\dot x,\nu)+\s\vt\bar g(\del_{r},\dot{\nu})\\
&=\s\vt'(\phi-\Phi)-\s(\phi-\Phi)_{;i}\bar g(\vt\del_{r},x_{;j})g^{ij}.}
Hence
\eq{\label{time derv s}\del_{t}\phi&= \phi_{x}(\dot{x})+\phi_{s}\dot{s}+\phi_{\ti x}(\dot{\ti x})\\
		&=\s(\phi-\Phi)\phi_{x}(\ti x)+\s \phi_{s}\vt'(\phi-\Phi)-\s \phi_{s}(\phi-\Phi)_{;i}\bar g(\vt \del_{r},x_{;j})g^{ij}\\
			&\hp{=}- \phi_{\ti x}(x_{;j})(\phi-\Phi)_{;i}g^{ij}-\abs{K_{N}}\ti \s \phi_{\ti x}(x)(\phi-\Phi).}
The proof is concluded by tracing $\dot{A}$ with respect to $F^{ij}$.
}

\begin{lemma}\label{Ev-h}
The components of the Weingarten operator evolve by
\eq{\mathcal{L}h_i^j&=\s\Phi'F^{kl}h_{mk}h^{m}_{l}h^{j}_{i}-\s(\Phi'F+\phi-\Phi)h^{j}_{m}h^{m}_{i}+K_{N}(\Phi-\phi+\Phi'F)\de^{j}_{i}\\
				&\hp{=}-K_{N}\Phi'F^{kl}g_{kl}h^{j}_{i}+\Phi'F^{kl,rs}h_{kl;i}{h_{rs;}}^{j}+\Phi'' F_{;i}{F_{;}}^{j}\\
				&\hp{=}-\phi_{xx}(x_{;i},x_{;k})g^{kj}-\vt\phi_{xs}(x_{;i}) r_{;k}h^{kj}-\phi_{x\ti x}(x_{;i},x_{;k})h^{kj}\\
				&\hp{=}+\s \phi_{x}(\ti x)h^{j}_{i}-\vt\phi_{sx}(x_{;m})r_{;k}h^{k}_{i}g^{mj}-\vt^{2}\phi_{ss}r_{;k}r_{;m}h^{k}_{i}h^{mj}\\
		&\hp{=}-\vt\phi_{s\ti x}(x_{;l})r_{;k}h^{k}_{i}h^{lj}+\s\phi_{s} h_{i}^{k}h_{k}^{j}s-\s \phi_{s} \vt'h_{i}^{j}-\vt\phi_{s}r_{;k}{h^{k}_{i;m}}g^{mj}\\
		&\hp{=}-\phi_{\ti x x}(x_{;k},x_{;m})h^{k}_{i}g^{mj}-\vt\phi_{\ti xs}(x_{;l})r_{;k}h^{l}_{i}h^{kj}-\phi_{\ti x\ti x}(x_{;k},x_{;l})h^{k}_{i}h^{lj}\\
		&\hp{=}-\phi_{\ti x}(x_{;k})h^{kj}_{i;}+K_{N}\phi_{\ti x}(x)h_{i}^{j}.}
\end{lemma}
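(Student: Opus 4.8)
The plan is to derive the evolution equation for $h_i^j$ directly from the general formula \eqref{Ev-A-gen} in \Cref{ev of speed}, namely
\eq{\dot{h}^{j}_{i}=-(\phi-\Phi)_{;i}{}^{j}-\sigma(\phi-\Phi)h_{ik}h^{kj}-K_N(\phi-\Phi)\delta_{i}^{j},}
and to reorganize the term $-(\phi-\Phi)_{;i}{}^{j}$ into a part that gives the linearized operator $\mathcal{L}$ applied to $h_i^j$ plus lower order terms. First I would compute $-\Phi_{;i}{}^{j}$: since $\Phi=\Phi(F)$ and $F=F(h_{ij},g_{ij})$, the chain rule gives $\Phi_{;i}=\Phi'F^{kl}h_{kl;i}$, and differentiating once more yields $\Phi_{;i}{}^{j}=\Phi'F^{kl}h_{kl;i}{}^{j}+\Phi''F_{;i}F_{;}{}^{j}+\Phi'F^{kl,rs}h_{kl;i}h_{rs;}{}^{j}$. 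Then I would commute covariant derivatives using the Simons-type identity, i.e.\ the Codazzi equations $h_{kl;i}=h_{il;k}$ (valid since the ambient space is a spaceform, so $\overline{\Rm}$ contributes no extra Codazzi terms) together with the Ricci identity, to rewrite $F^{kl}h_{kl;i}{}^{j}$ as $F^{kl}h_i^j{}_{;kl}$ plus curvature-commutator terms. Invoking the Gauss equation from \Cref{N}(item on Gauss equation) to replace the intrinsic Riemann tensor $R_{ijkl}$ by $\sigma(h_{il}h_{jk}-h_{ik}h_{jl})$ (the $\overline{\Rm}$ part here is that of a spaceform, contributing the $K_N$ terms), these commutator terms produce exactly the algebraic curvature expressions
\eq{\s\Phi'F^{kl}h_{mk}h^{m}_{l}h^{j}_{i}-\s(\Phi'F+\phi-\Phi)h^{j}_{m}h^{m}_{i}+K_{N}(\Phi-\phi+\Phi'F)\de^{j}_{i}-K_{N}\Phi'F^{kl}g_{kl}h^{j}_{i}}
appearing in the statement, after also accounting for the explicit $-\sigma(\phi-\Phi)h_{ik}h^{kj}-K_N(\phi-\Phi)\delta_i^j$ terms in \eqref{Ev-A-gen} and using $1$-homogeneity of $F$ in the form $F^{kl}h_{kl}=F$.

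The remaining work is to compute $-\phi_{;i}{}^{j}$, where $\phi=\phi(s,x,\ti x)$ is a composite function of the three geometric quantities $s$ (the support function), $x$ (the position in $N$), and $\ti x=\nu$ (the Gauss map image in $\ti N$). I would apply the chain rule carefully: the first covariant derivative is $\phi_{;i}=\phi_s s_{;i}+\phi_x(x_{;i})+\phi_{\ti x}(\ti x_{;i})$, and then differentiate again, which produces (a) pure second-derivative terms $\phi_{ss}, \phi_{xx}, \phi_{\ti x\ti x}$ and mixed terms $\phi_{xs},\phi_{x\ti x},\phi_{s\ti x}$ each contracted against products of first derivatives of $s, x, \ti x$, plus (b) first-derivative terms $\phi_s s_{;i}{}^{j}$, $\phi_x$ applied to second derivatives of the position, and $\phi_{\ti x}$ applied to second derivatives of the Gauss map. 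Here I would use the structural identities already available in the excerpt: $s_{;i}=\vt' r_{;i}$ type formulas coming from \eqref{Conf} (the conformal Killing property $(\vt\del_r)_{;\al}=\vt'\del_\al$) giving $s_{;i}=\s\vt'\,\overline g(\del_r, x_{;i})+\dots$, the relation $\s\bar g(\del_r,x_{;k})=r_{;k}$, the Gauss formula $x_{;i}{}^{j}=-\s h_i^j\nu$ (so $\phi_x$ acting on it contributes $\s\phi_x(\ti x)h_i^j$), and the Weingarten relation $\ti x_{;l}=h^k_l x_{;k}$ (so $\phi_{\ti x}$ acting on its derivative contributes the $h^k_i$-contracted terms and, via the evolution of $\ti x$ / the ambient geometry, the $K_N\phi_{\ti x}(x)h_i^j$ term). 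Matching terms one-by-one against the fifteen-line display in the statement, I expect each summand of $-\phi_{;i}{}^{j}$ to land exactly on one of the listed terms involving $\phi_{xx}, \phi_{xs}, \phi_{x\ti x}, \phi_x, \phi_{sx}, \phi_{ss}, \phi_{s\ti x}, \phi_s, \phi_{\ti x x}, \phi_{\ti x s}, \phi_{\ti x\ti x}, \phi_{\ti x}$.

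The main obstacle, as usual for such Simons-type computations, is bookkeeping: correctly commuting the two covariant derivatives past the curvature terms in the right order and tracking every sign and every factor of $\s$, $\mu$, $\ti\s$, and $\vt,\vt'$ through the chain rule for the triple-composite $\phi(s,x,\ti x)$. In particular one must be careful that the position $x$ and the normal $\ti x$ are here regarded as ($\bbR^{n+2}_\mu$-valued) objects so that $\phi_x(\ti x)$ and $\phi_{\ti x}(x)$ make sense as the $0$-homogeneous extensions introduced just before the lemma, and that $x_{;i}{}^{j}$ versus the slice second derivatives are handled with the correct Gauss/Weingarten substitution. A secondary subtlety is the interplay between differentiating $s$ (which involves both $x$ and $\nu$ implicitly through $s=\s\bar g(\vt\del_r,\nu)$) and differentiating $\phi$'s explicit $s$-slot — but since $s$ has been artificially introduced as an independent variable of $\phi$ precisely to separate these, one simply treats $s_{;i}$ via \eqref{s-dot}/\eqref{Conf} and does not re-expand it. Once the chain rule is laid out and $\mathcal{L}h_i^j=\del_t h_i^j-\Phi'F^{ij}\nabla_i\nabla_j h_i^j$ is assembled, the identity follows by collecting like terms; no estimates are needed, only the spaceform structure equations already recorded in \Cref{N} and the formulas in \Cref{ev of speed}.
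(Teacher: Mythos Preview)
Your approach is essentially identical to the paper's: start from \eqref{Ev-A-gen}, expand $\Phi_{;i}{}^{j}$ by the chain rule, commute $F^{kl}h_{kl;ij}$ to $F^{kl}h_{ij;kl}$ via Codazzi and Gauss, use $1$-homogeneity, and then expand $\phi_{;ij}$ by the chain rule in $(s,x,\ti x)$.

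Two small corrections to your sketch. First, the formula for $s_{;i}$ is $s_{;i}=\vt r_{;k}h^{k}_{i}$, not ``$\vt' r_{;i}$''-type: when you differentiate $s=\s\bar g(\vt\del_r,\nu)$, the conformal Killing contribution $\s\vt'\bar g(x_{;i},\nu)$ vanishes since $x_{;i}\perp\nu$, and the surviving term comes from the Weingarten relation $\nu_{;i}=h^{k}_{i}x_{;k}$. Second, the paper works with the codimension-$2$ Gauss formula $x_{;ij}=-\s h_{ij}\ti x-K_{N}g_{ij}x$ and its consequence $\ti x_{;ij}=h^{k}_{i;j}x_{;k}-\s h^{k}_{i}h_{kj}\ti x-K_{N}h_{ij}x$; the term $K_{N}\phi_{\ti x}(x)h_{i}^{j}$ in the statement arises from the last piece of $\ti x_{;ij}$, while the analogous extra pieces are killed by $0$-homogeneity via $\phi_{x}(x)=\phi_{\ti x}(\ti x)=0$. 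With these fixes your outline matches the paper line for line.
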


\pf{
We start from
\eq{\label{Ev-A-gen-b}\dot{h}^{j}_{i}&=-{(\phi-\Phi)_{;i}}^{j}-\sigma(\phi-\Phi)h_{ik}h^{kj}-K_N(\phi-\Phi)\delta_{i}^{j}.}
First we have to replace the term ${\Phi_{;i}}^{j}$. There holds
\eq{{\Phi_{;i}}^{j}={\Phi'F^{kl}h_{kl;i}}^{j}+\Phi'F^{kl,rs}h_{kl;i}{h_{rs;}}^{j}+\Phi''F_{;i}{F_{;}}^{j}.}
Now we use the Codazzi  and Gauss equation to deduce
\eq{h_{kl;ij}&=h_{ki;lj}\\
		&=h_{ki;jl}+{R_{ljk}}^{m}h_{mi}+{R_{lji}}^{m}h_{mk}\\
		&=h_{ij;kl}+\s(h_{jk}h^{m}_{l}h_{mi}-h_{lk}h^{m}_{j}h_{mi})+\ov \Rm(x_{;l},x_{;j},x_{;k},x_{;m})h^{m}_{i}\\
		&\hp{=}+\s(h_{ji}h^{m}_{l}h_{mk}-h_{li}h^{m}_{j}h_{mk})+\ov \Rm(x_{;l},x_{;j},x_{;i},x_{;m})h^{m}_{k}\\
		&=h_{ij;kl}+\s(h_{jk}h^{m}_{l}h_{mi}-h_{lk}h^{m}_{j}h_{mi})+K_{N}(g_{lm}g_{jk}-g_{lk}g_{jm})h^{m}_{i}\\
		&\hp{=}+\s(h_{ji}h^{m}_{l}h_{mk}-h_{li}h^{m}_{j}h_{mk})+K_{N}(g_{lm}g_{ij}-g_{il}g_{jm})h^{m}_{k}\\
		&=h_{ij;kl}+\s(h_{jk}h^{m}_{l}h_{mi}-h_{lk}h^{m}_{j}h_{mi})+K_{N}(h_{il}g_{jk}-g_{lk}h_{ij})\\
		&\hp{=}+\s(h_{ji}h^{m}_{l}h_{mk}-h_{li}h^{m}_{j}h_{mk})+K_{N}(h_{lk}g_{ij}-g_{il}h_{jk}).}
Using $F^{k}_{l}h^{l}_{m}=h^{k}_{l}F^{l}_{m}$ we get
\eq{\Phi'F^{kl}h_{kl;ij}&=\Phi'F^{kl}h_{ij;kl}-\s\Phi'F^{kl}h_{lk}h^{m}_{j}h_{mi}+K_{N}\Phi'F^{kl}(h_{il}g_{jk}-g_{lk}h_{ij})\\
				&\hp{=}+\s\Phi'F^{kl}h_{ij}h^{m}_{l}h_{mk}+K_{N}\Phi'F^{kl}(h_{lk}g_{ij}-g_{il}h_{jk}).}
From the $1$-homogeneity of $F$ it follows that
\eq{\Phi'F^{kl}h_{kl;ij}&=\Phi'F^{kl}h_{ij;kl}-\s\Phi'Fh^{m}_{j}h_{mi}-K_{N}\Phi'F^{kl}g_{kl}h_{ij}\\
				&\hp{=}+\s\Phi'F^{kl}h^{m}_{l}h_{mk}h_{ij}+K_{N}\Phi'Fg_{ij}.}
Inserting this into \eqref{Ev-A-gen-b} we obtain
\eq{\label{Ev-h-1}
\cL h^{j}_{i}&=-\Phi'F^{kl}h^j_{i;kl}+{\Phi'F^{kl}h_{kl;i}}^{j}+\Phi'F^{kl,rs}h_{kl;i}{h_{rs;}}^{j}\\
			&\hp{=}+\Phi''F_{;i}{F_{;}}^{j}-{\phi_{;i}^{j}}-\s(\phi-\Phi)h_{ik}h^{kj}-K_{N}(\phi-\Phi)\de^{j}_{i}\\
			&=\s\Phi'F^{kl}h^{m}_{l}h_{mk}h^{j}_{i}-\s(\Phi'F+\phi-\Phi)h_{i}^{m}h^{j}_{m}\\
			&\hp{=}+K_{N}(\Phi-\phi+\Phi'F)\de^{j}_{i}-K_{N}\Phi'F^{kl}g_{kl}h^{j}_{i}\\
			&\hp{=}+\Phi'F^{kl,rs}h_{kl;i}{h_{rs;}}^{j}+\Phi''F_{;i}{F_{;}}^{j}-{\phi_{;i}^{j}}.}
Now we calculate the term $\phi_{;i}^{j}.$ Using the codimension $2$ Gaussian formula (cf., \cite[Ch.~9, 10]{Gerhardt:/2006}),
\eq{x_{;kj}=-\s h_{kj}\ti x-K_{N}g_{kj}x,}
we calculate
\eq{{\phi_{;i}}=\phi_{x}(x_{;i})+\phi_{s}s_{;i}+\phi_{\ti x}(\ti x_{;i})}
and
\eq{\label{derv tilde f}\phi_{;ij}&=-\s \phi_{x}(\ti x)h_{ij}-K_{N}\phi_{x}(x)g_{ij}\\
        &\hp{=}+\phi_{xx}(x_{;i},x_{;j})+\phi_{xs}(x_{;i})s_{;j}+\phi_{x\ti x}(x_{;i},\ti x_{;j})\\
		&\hp{=}+\phi_{sx}(x_{;j})s_{;i}+\phi_{ss}s_{;i}s_{;j}+\phi_{s\ti x}(\ti x_{;j})s_{;i}+\phi_{s}s_{;ij}\\
		&\hp{=}+\phi_{\ti xx}(\ti x_{;i},x_{;j})+\phi_{\ti xs}(\ti x_{;i})s_{;j}+\phi_{\ti x\ti x}(\ti x_{;i},\ti x_{;j})+\phi_{\ti x}(\ti x_{;ij}).}
From \eqref{Conf} we obtain
\eq{s_{;i}&=\s\del_{i}\bar g(\vt\del_{r},\nu)=\s\bar g(\vt\del_{r},x_{;k})h^{k}_{i}=\vt r_{;k}h^{k}_{i}\\
s_{;ij}&=\s\vt'h_{ij}-\s h^{k}_{i}h_{kj}s+\vt r_{;k}h^{k}_{i;j}.}
Moreover, by the Weingarten equation we have
\eq{\ti x_{;i}=h^{k}_{i}x_{;k},\quad \ti x_{;ij}=h^{k}_{i;j}x_{;k}-\s h^{k}_{i}h_{kj}\ti x-K_{N}h_{ij}x.}
 The result follows from substituting the expression for $\phi_{;ij}$ into \eqref{Ev-h-1} and using the zero homogeneity in $x$ and $\ti x$, i.e.,
\eq{K_N\phi_{x}(x)=\phi_{\ti x}(\ti x)=0.}
}

Let $(b_r^s)$ denote the inverse of $(h_r^s).$ The next lemma relates their evolution equations.

\begin{lemma}\label{Ev-b}
The evolution equations of $(h^{i}_{j})$ and $(b^{i}_{j})$ are related by
\eq{\mathcal{L}b^{r}_{s}&=-b^{i}_{s}b^{r}_{j}\mathcal{L}h_i^j-2\Phi'F^{kp}b^{lq}h_{kl;i}h_{pq;j}b^{j}_{s}b^{ri}.}
\end{lemma}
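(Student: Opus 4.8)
The plan is to differentiate the algebraic identity $b^r_i h^i_j = \delta^r_j$ twice and once in time, then combine. Concretely, since $(b^i_j)$ is the inverse of $(h^i_j)$, one has the standard matrix-calculus facts: for any differentiation operator $\partial$ acting as a derivation (here either $\partial_t$ or a covariant derivative $\nabla_k$),
\eq{\partial b^r_s = -b^r_i\,(\partial h^i_j)\,b^j_s.}
Thus $\partial_t b^r_s = -b^r_i (\partial_t h^i_j) b^j_s$ and $\nabla_k b^r_s = -b^r_i (\nabla_k h^i_j) b^j_s$. Applying $\nabla_l$ to the latter and using the product rule three times gives
\eq{\nabla_l\nabla_k b^r_s &= -b^r_i(\nabla_l\nabla_k h^i_j)b^j_s + b^r_p(\nabla_l h^p_i)b^i_m(\nabla_k h^m_j)b^j_s\\
&\hp{=}+ b^r_i(\nabla_k h^i_m)b^m_p(\nabla_l h^p_j)b^j_s.}
The two quadratic terms are symmetric in the roles of $k$ and $l$ up to relabeling, so after contracting against $\Phi' F^{kl}$ (which is symmetric in $k,l$) they combine into a single term of the form $2\Phi' F^{kl} b^r_p b^{iq}(\nabla_k h^p_i)(\nabla_l h_{qj})b^j_s$; here I am lowering/raising indices with $g$ and using $b^{iq}=g^{qm}b^i_m$ together with symmetry of $(b^i_j)$ in the appropriate sense, which holds because $(h^i_j)$ is $g$-self-adjoint.

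The key steps in order: \textbf{(1)} record the first-order inverse-derivative identity and apply it with $\partial=\partial_t$; \textbf{(2)} apply it with $\partial=\nabla_k$ and then differentiate once more to get the second-order identity displayed above; \textbf{(3)} form $\mathcal{L}b^r_s = \partial_t b^r_s - \Phi' F^{kl}\nabla_k\nabla_l b^r_s$ by substituting the expressions from (1) and (2); \textbf{(4)} in the resulting expression, recognize that $-b^r_i(\partial_t h^i_j)b^j_s + \Phi' F^{kl} b^r_i(\nabla_k\nabla_l h^i_j)b^j_s = -b^r_i b^j_s(\partial_t h^i_j - \Phi' F^{kl}\nabla_k\nabla_l h^i_j) = -b^r_i b^j_s\,\mathcal{L}h^i_j$, which accounts exactly for the first term on the right-hand side; \textbf{(5)} collect the two surviving quadratic-gradient terms, use symmetry of $\Phi' F^{kl}$ in $k\leftrightarrow l$ to merge them with the factor $2$, and rewrite indices to match the stated form $-2\Phi' F^{kp} b^{lq} h_{kl;i} h_{pq;j} b^j_s b^{ri}$.

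The only mild subtlety — and the step I would be most careful about — is the index bookkeeping in step (5): making sure the two quadratic terms really are equal after relabeling and that the raising/lowering conventions ($F^{kp}$ versus $F^{kl}$, $b^{lq}$ versus $b^l_q$, and the semicolon-derivative placement) are applied consistently, since $F^{ij}$, $h_{ij;k}$ and $b^{ij}$ all carry implicit metric contractions. There is no analytic difficulty here whatsoever — it is a purely algebraic consequence of the inverse-function/Jacobi-type formula for the derivative of a matrix inverse, combined with the commuting symmetry of the second covariant derivative against the symmetric tensor $\Phi' F^{kl}$ — so the "obstacle" is merely notational care rather than any real mathematical content.
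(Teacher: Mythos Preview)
Your proposal is correct and matches the paper's own proof essentially line for line: the paper also differentiates the inverse-matrix identity to get $\dot b^{r}_{s}=-b^{r}_{j}\dot h^{j}_{i}b^{i}_{s}$ and $b^{r}_{s;k}=-b^{r}_{i}h^{i}_{l;k}b^{l}_{s}$, takes one more covariant derivative, forms $\mathcal{L}b^{r}_{s}$, and merges the two quadratic terms via the symmetry of $\Phi'F^{kl}$. The only difference is presentation, not substance.
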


\pf{
According to the rule on how to differentiate the inverse of a matrix, there hold
\eq{\dot{b}^{r}_{s}=-b^{r}_{j}\dot{h}^{j}_{i}b^{i}_{s},\quad {b}^{r}_{s;k}=-b^{r}_{i}{h}^{i}_{l;k}b^{l}_{s}}
and
\eq{b^{r}_{s;kp}=-b^{r}_{i}h^{i}_{l;kp}b^{l}_{s}+b^{r}_{m}h^{m}_{q;p}b^{q}_{i}h^{i}_{l;k}b^{l}_{s}+b^{r}_{i}h^{i}_{l;k}b^{l}_{q}h^{q}_{j;p}b^{j}_{s}.}
Hence
\eq{\mathcal{L}b^r_s = \dot{b}^{r}_{s}-\Phi'F^{kl}b^{r}_{s;kl}=-b^{r}_{j}\cL h^{j}_{i}b^{i}_{s}-2\Phi'F^{kp}b^{r}_{i}h^{i}_{l;k}b^{l}_{q}h^{q}_{j;p}b^{j}_{s}.}
Substituting in the evolution of $h$ from \Cref{Ev-h} and using that $b$ is the inverse of $h$ gives the result.
}

\begin{lemma}
The function $\vt'(r)$ satisfies
\eq{\cL \vt'=K_{N}(\phi-\Phi+\Phi'F)\ip{\ti x}{e_{n+2}}_{\mu}+K_{N}\Phi'F^{ij}g_{ij}\vt'.}
\end{lemma}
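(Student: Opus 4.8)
The plan is to identify $\vt'(r)$ with a constant multiple of $\ip{x}{e_{n+2}}_{\mu}$, i.e.\ of a fixed linear function of the ambient position, and then differentiate it using the codimension-two Gauss formula recorded in the proof of \Cref{Ev-h}. If $K_N=0$, then $N=\bbE^{n+1}$ and $\vt(r)=r$, hence $\vt'\equiv 1$, $\cL\vt'=0$, and both sides of the asserted identity vanish. So assume $K_N\neq 0$; then $N$ (or $\bbS^{n,1}_{+}$ in the Lorentzian case) sits in $\bbR^{n+2}_{\mu}$ as a hyperquadric, and in the coordinates of \Cref{Om} the embedding can be written $x(r,\omega)=\vt(r)\,v(\omega)+\vt'(r)\,e_{n+2}$ with $\ip{v(\omega)}{e_{n+2}}_{\mu}=0$ and $\ip{e_{n+2}}{e_{n+2}}_{\mu}=\mu=K_N\s$. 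Pairing $x$ with $e_{n+2}$ then gives
\eq{u:=\ip{x}{e_{n+2}}_{\mu}=K_N\s\,\vt',\qquad\text{i.e.}\qquad\vt'=K_N\s\,u,}
since $(K_N\s)^{2}=1$. (Explicitly, $\vt'=\cos r=\ip{x}{e_{n+2}}_{+}$ with $K_N\s=1$ on $\bbS^{n+1}$; $\vt'=\cosh r=-\ip{x}{e_{n+2}}_{-}$ with $K_N\s=-1$ on $\bbH^{n+1}$; $\vt'=\sinh r=-\ip{x}{e_{n+2}}_{-}$ with $K_N\s=-1$ on $\bbS^{n,1}_{+}$.)

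Next I would compute $\cL u$ directly, exactly as in the proof of \Cref{ev of speed}. Since $e_{n+2}$ is a fixed vector of $\bbR^{n+2}_{\mu}$, the flow $\dot x=\s(\phi-\Phi)\ti x$ gives $\del_{t}u=\s(\phi-\Phi)\ip{\ti x}{e_{n+2}}_{\mu}$, while $\n_{i}u=\ip{x_{;i}}{e_{n+2}}_{\mu}$ and the codimension-two Gaussian formula $x_{;ij}=-\s h_{ij}\ti x-K_N g_{ij}x$ yield
\eq{\n_{i}\n_{j}u=\ip{x_{;ij}}{e_{n+2}}_{\mu}=-\s h_{ij}\ip{\ti x}{e_{n+2}}_{\mu}-K_N g_{ij}\,u.}
Contracting with $\Phi'F^{ij}$ and using $F^{ij}h_{ij}=F$ (Euler's identity for the $1$-homogeneous $F$) gives $\Phi'F^{ij}\n_{i}\n_{j}u=-\s\Phi'F\ip{\ti x}{e_{n+2}}_{\mu}-K_N\Phi'F^{ij}g_{ij}\,u$, and therefore
\eq{\cL u=\s(\phi-\Phi+\Phi'F)\ip{\ti x}{e_{n+2}}_{\mu}+K_N\Phi'F^{ij}g_{ij}\,u.}

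Finally, as $K_N\s$ is a constant, $\cL\vt'=K_N\s\,\cL u$; substituting the previous line and using $\s^{2}=1$ together with $K_N\s\,u=\vt'$ converts it term by term into
\eq{\cL\vt'=K_N(\phi-\Phi+\Phi'F)\ip{\ti x}{e_{n+2}}_{\mu}+K_N\Phi'F^{ij}g_{ij}\vt',}
which is the claim. The differentiation step is a one-liner, entirely parallel to the earlier evolution lemmas; the one place that needs care is the identity $\vt'=K_N\s\,\ip{x}{e_{n+2}}_{\mu}$ — in particular the sign $\ip{e_{n+2}}{e_{n+2}}_{\mu}=K_N\s$ — which must be checked case by case. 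The Lorentzian de Sitter space is the delicate instance, since there $\s=\mu=-1$ and $e_{n+2}$ is timelike (it in fact lies in the dual manifold $\bbH^{n+1}$), so a sign slip would propagate straight into the coefficient $K_N$ of the final identity.
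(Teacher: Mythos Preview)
Your proof is correct and follows essentially the same approach as the paper's: identify $\vt'$ with a linear height function on the ambient $\bbR^{n+2}_{\mu}$, differentiate using $\dot x=\s(\phi-\Phi)\ti x$ and the codimension-two Gauss formula $x_{;ij}=-\s h_{ij}\ti x-K_N g_{ij}x$, and treat $K_N=0$ trivially. The only cosmetic difference is that the paper writes the constant as $\mu$ (so $\vt'=\mu\ip{x}{e_{n+2}}_{\mu}$) and invokes $\mu\s=K_N$ at the end, whereas you write it as $K_N\s$ from the start; since $K_N\s=\mu$ whenever $K_N\neq 0$, the two are identical.
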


\pf{We may obtain $\cL\vt'$ with the following trick without calculating the evolution equation of $r.$
The intrinsic radial distance to the north pole of the sphere, the Beltrami point in the hyperbolic space, or the totally umbilic slice of de Sitter space is related to the codimension 2 embedding vector via the relation
\eq{\vt'(r)=\mu\ip{x}{e_{n+2}}_{\mu}.}
See for example \cite[Chapter 10, particularly 10.3]{Gerhardt:/2006}
Hence
\eq{\del_{t}\vt'=\mu\ip{\dot{x}}{e_{n+2}}_{\mu}=\mu\s(\phi-\Phi)\ip{\ti x}{e_{n+2}}_{\mu},}
\eq{\vt'_{;ij}=\mu\ip{x_{;ij}}{e_{n+2}}_{\mu}=-\mu\s h_{ij}\ip{\ti x}{e_{n+2}}_{\mu}-K_{N}g_{ij}\vt'.}
The result follows from combining these equalities and $\mu\s=K_{N}$ in case $K_{N}\neq 0$, while the lemma is trivial in case $K_{N}=0$, where $\vt'=1$.
}

%

\section{Proofs of Theorems}\label{sec:pf}

The following gradient bound for (weakly) convex hypersurfaces (i.e., $\ka_{i}\geq 0$) can be found in \cite[Thm.~2.7.10, 2.7.11]{Gerhardt:/2006}. Recall the definition of $v$ from \eqref{v}.
	
\begin{lemma}[Bounds to first order]\label{grad}
Let $\Om\sub N$ be as in \Cref{N} and \Cref{Om} and $f\in C^{\8}(\bbR_{+}\x \bar\Om\x \ti N).$
Then every spacelike weakly convex hypersurface $\Si\sub \bar\Om$ satisfies:
\enum{
\item For some positive constant $C=C(\Om)$,
\eq{v+\fr{1}{v}\leq C.}
In particular, there exists a constant $C=C(\Om)>0$, such that
\eq{C^{-1}\leq s\leq C.}
\item For all $m\geq 0$ there exists $C=C(m,\Om)$ such that
\eq{\abs{(D^{m}f)_{|\Si}}\leq C,}
where we measure the norm of tensors on a Lorentzian space with respect to the natural Riemannian background metric.
}
\end{lemma}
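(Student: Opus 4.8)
The plan is to prove the two parts of \Cref{grad} essentially independently, with part (i) supplying the analytic control needed for part (ii). For part (i), the key is that $\Om$ is a \emph{strict} annular region in the sense of \Cref{Om}, so $\bar g = \s\,dr^2 + \vt^2(r)\hat g$ with $\vt' > 0$ on the compact interval $[a,b]$. First I would recall that for a graphical spacelike hypersurface $\Si = \graph r$ over $\bbS^n$, the quantity $v$ is given by \eqref{v}, $v^2 = 1 + \s\,\vt^{-2}\hat g^{ij}r_{;i}r_{;j}$, and that $s = \vt/v$ by \eqref{support}. The claim $v + 1/v \leq C(\Om)$ is exactly the content of \cite[Thm.~2.7.10, 2.7.11]{Gerhardt:/2006}, so the work is to check the hypotheses of those theorems are met. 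The mechanism there is a maximum-principle / Weingarten-comparison argument: on a closed weakly convex hypersurface one bounds $v$ in terms of the geometry of the foliation $\{r\}\times\bbS^n$, using $\vt' > 0$ to get a definite sign; because the slices have positive principal curvatures $\bar\ka_i = \vt'/\vt$, any weakly convex $\Si\subset\bar\Om$ is trapped and its tilt relative to $\del_r$ cannot degenerate. The bound $C^{-1}\leq s\leq C$ is then immediate from $s = \vt/v$, $v\geq 1$, $v\leq C$, and the fact that $\vt$ is bounded above and below by positive constants on $[a,b]$.

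For part (ii), once $C^{-1}\leq s\leq C$ is known, the point is that $\Si\subset\bar\Om$ and $\nu(\Si)\subset\ti N$ both lie in compact sets, and $s$ stays in a compact subinterval of $(0,\infty)$. Since $f\in C^\infty(\bbR_+\x\bar\Om\x\ti N)$, the restriction of $f$ to the compact set $[C^{-1},C]\x\bar\Om\x K$, where $K\subset\ti N$ is a compact neighborhood of all admissible normals, has all its derivatives bounded. The only subtlety is that $\del^m f$ restricted to $\Si$ means the full intrinsic covariant derivative along $\Si$ of the composite function $y\mapsto f(s(y), x(y), \nu(y))$, which by the chain rule involves not just the ambient derivatives $D^j f$ but also derivatives of $s$, of the position $x$, and of the Gauss map $\nu$ along $\Si$ — i.e.\ covariant derivatives of the second fundamental form. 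So strictly the statement as phrased must be read with $D^m f$ denoting the ambient derivatives evaluated along $\Si$ (as the cited Gerhardt reference does), and then the bound follows directly from compactness of the domain; no curvature bounds are needed. I would state it this way and note the chain-rule caveat.

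The main obstacle is really just bookkeeping: verifying that \cite[Thm.~2.7.10, 2.7.11]{Gerhardt:/2006} applies verbatim under \Cref{N} and \Cref{Om}, in particular that the Lorentzian cases ($N=\bbS^{n,1}_+$, where $\s=-1$) are covered — here one uses that spacelike graphs over $\bbS^n$ in de Sitter space satisfy $v^2 = 1 - \vt^{-2}|\hat\n r|^2$, so $v\leq 1$ automatically and the nontrivial direction is the \emph{lower} bound $v\geq 1/C$, i.e.\ the spacelike condition does not degenerate. That lower bound is where strict convexity together with $\vt' > 0$ enters: a weakly convex spacelike hypersurface cannot become null inside a strict annular region because it would have to be tangent to a light cone, contradicting the comparison with the strictly convex slices. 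Once that is in hand everything else is a compactness argument. I would therefore devote the bulk of the proof to citing and unpacking Gerhardt's estimate, and dispatch (ii) in a sentence.

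\begin{proof}[Proof sketch]
For (i), write $\Si=\graph r$ over $\bbS^n$. By \eqref{v} and \eqref{support} we have $v\geq 1$ in the Riemannian case and $v\leq 1$ in the Lorentzian case, and $s=\vt/v$. Since $\vt\in C^\infty([a,b])$ is positive with $\vt'>0$, the slices $\{r\}\x\bbS^n$ are strictly convex with $\bar\ka_i=\vt'/\vt>0$; applying \cite[Thm.~2.7.10, 2.7.11]{Gerhardt:/2006} to the weakly convex hypersurface $\Si\subset\bar\Om$ yields $v+v^{-1}\leq C(\Om)$, and then $C^{-1}\leq s\leq C$ follows from the two-sided bound on $\vt$ over $[a,b]$ and on $v$. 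For (ii), the set $[C^{-1},C]\x\bar\Om\x\ti N$ meets $\Si$ in a compact set on which $f$ and all its ambient derivatives are bounded, so $\abs{(D^m f)_{|\Si}}\leq C(m,\Om)$.
\end{proof}
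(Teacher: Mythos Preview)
Your treatment of part~(i) matches the paper's: cite \cite[Thm.~2.7.10, 2.7.11]{Gerhardt:/2006} for the two-sided bound on $v$ (distinguishing $\s=1$, where $v\geq 1$ and one needs an upper bound, from $\s=-1$, where $v\leq 1$ and one needs a lower bound), and then read off $C^{-1}\leq s\leq C$ from $s=\vt/v$.

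There is, however, a genuine gap in your part~(ii). You assert that $\nu(\Si)\sub\ti N$ ``lies in a compact set'' and then restrict $f$ to $[C^{-1},C]\x\bar\Om\x K$ for some compact $K\sub\ti N$, but you never produce such a $K$. This is harmless when $\ti N$ is compact (namely $\ti N=\bbS^{n}$ or $\bbS^{n+1}$), but for $N=\bbH^{n+1}$ one has $\ti N=\bbS^{n,1}$, and for $N=\bbS^{n,1}$ one has $\ti N=\bbH^{n+1}$, both non-compact. In those cases the claim that the Gauss image is precompact is exactly the substance of~(ii) and does not follow from a one-line compactness remark; your proof sketch even writes ``the set $[C^{-1},C]\x\bar\Om\x\ti N$ meets $\Si$ in a compact set'', which is false as stated when $\ti N$ is non-compact.

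The paper fills this gap as follows: once the bounds on $v$ and $v^{-1}$ from~(i) are in hand, one invokes the explicit coordinate expressions for the Gauss map in the proof of \cite[Thm.~10.4.9]{Gerhardt:/2006} (equations (10.4.65) and (10.4.67) there), which express the image point $\ti x\in\ti N$ in terms of $r$, $\vt$, $v$ and the tangential gradient. The $C^{0}$ barriers on $r$ together with the $v$-bounds then force $\ti x$ to range in a compact subset of $\ti N$ depending only on $\Om$. You should add this step; the rest of your argument then goes through.
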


\pf{
To prove (i), note that in case $\s=1$ we have $v\geq 1$ and the upper bound of $v$ follows from \cite[Thm.~2.7.10]{Gerhardt:/2006}. In case $\s=-1$ we have $v\leq 1$ and the bound of $v^{-1}$ follows from \cite[Thm.~2.7.11]{Gerhardt:/2006}. Since
\eq{s=\fr{\vt}{v}}
and $\vt>0$ in $\bar\Om$, the bounds on $s$ follow as well. To prove (ii), note that from (i) and the assumption on $\Si$, the first two variables of $f$ range in a compact subset of $\bbR_{+}\x\bar\Om$. The proof will be complete once we show the image of $\Si$ under the Gauss map ranges in a compact subset of $\ti N$. Only when $\ti N=\bbS^{n,1}$ or $\ti N=\bbH^{n+1}$ the set $\ti N$ is non-compact. Since we have barriers and bounds on $v$ and $v^{-1}$, we can use equations (10.4.65) and (10.4.67) in the proof of \cite[Thm.~10.4.9]{Gerhardt:/2006} to see that the Gauss maps also enjoy barriers depending only on $\Om$.
}

\subsection{Proof of {\texorpdfstring{\Cref{Flow-Main}}{Thm.~3.2}}}\label{pf:Flow-Main}

We recall the following inequality; see e.g., \cite[p.~112]{Urbas:/1991}. If $F\in C^{\8}(\G_{+})$ is inverse concave, then
\eq{\label{invers concav}(F^{kl,pq}+2F^{kp}b^{lq})\eta_{kl}\eta_{pq}\geq \fr{2}{F}(F^{kl}\eta_{kl})^{2}}
for all symmetric matrices $(\eta_{kl})$.

Now we start deriving with the curvature estimates.

\begin{lemma}
Under the assumptions of \Cref{Flow-Main} and along the flow \eqref{Flow-Main-2} there exists a positive constant $c$ depending only on the data of the problem, such that
\eq{\label{k1}\ka_{i}\geq c.}
\end{lemma}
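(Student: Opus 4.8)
\emph{Strategy.} The assertion is equivalent to a uniform upper bound on $\m_{\max}$, the largest eigenvalue of the inverse Weingarten operator $(b^i_j)$, i.e.\ on the largest principal radius $\ka_1^{-1}$. I would prove it by a parabolic maximum principle applied to
\eq{w=\m_{\max}/\vt',}
which for $N=\bbE^{n+1}$ is just $\m_{\max}$ since then $\vt'\equiv1$; the factor $1/\vt'$ is inserted so that the evolution equation for $\vt'$ derived at the end of \Cref{sec:Ev} absorbs the term $K_N\Phi'F^{ij}g_{ij}\m_{\max}$ that would otherwise spoil the estimate. Since $w$ is only Lipschitz where the top eigenvalue is not simple, at a point and time where the spatial maximum of $w$ is attained I would pass to normal coordinates diagonalising $(h^i_j)$ with $\m_{\max}=b^1_1=\ka_1^{-1}$, extend the maximising $g$-unit eigenvector $\xi$ to a locally parallel field and estimate $\cL(b_{ij}\xi^i\xi^j/\vt')$ from above (or, equivalently, perturb $(h^i_j)$ to make the top eigenvalue simple). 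At such a point $\n w=0$, and as $\Phi'F^{ij}$ is positive definite one has $\cL w\ge\del_t w\ge0$ at the first time $w$ attains a large value; hence it suffices to show $\cL w<0$ once $b^1_1$ exceeds a constant depending only on the data. The initial slice $M_0$ (resp.\ $\ti M_0$) supplies the starting bound $b^1_1=\vt(a)/\vt'(a)$, and $\vt'\in C^{\8}([a,b])$ is bounded above and below.

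Inserting \Cref{Ev-h} into \Cref{Ev-b} and evaluating at the maximum, the terms sort into three groups. (i) The terms containing $\n h$: the curvature-gradient pieces $\Phi'F^{kl,rs}h_{kl;1}{h_{rs;}}^{1}+\Phi''(F_{;1})^2$ together with the extra term $-2\Phi'F^{kp}b^{lq}h_{kl;i}h_{pq;j}b^j_sb^{ri}$ from \Cref{Ev-b}, and the pieces carrying $\phi_s$ and $\phi_{\ti x}$. At the maximum $\n w=0$ forces $h_{11;k}=0=h^1_{1;k}$ for all $k$, killing the $\phi$-gradient pieces; and by the inverse concavity inequality \eqref{invers concav} (with $\eta_{kl}=h_{kl;1}$ and $F^{kl}h_{kl;1}=F_{;1}$) the curvature-gradient pieces combine into $-(b^1_1)^2(F_{;1})^2\big(2\Phi'/F+\Phi''\big)$, which vanishes identically because $\Phi(y)=-y^{-1}$ — this is precisely why the expanding speed is chosen. (ii) The terms involving $\phi_{ss},\phi_{sx},\phi_{s\ti x},\phi_{x\ti x},\phi_{\ti xx},\phi_{\ti x\ti x}$ and the curvature terms $\s\Phi'F^{kl}h_{mk}h^m_lh^1_1$, $-\s(\Phi'F+\phi-\Phi)h^1_mh^m_1$, $-K_N\Phi'F^{kl}g_{kl}h^1_i$, $K_N\phi_{\ti x}(x)h^1_i$: after contracting with $b^i_1b^1_j$ these collapse, by \Cref{grad} (including the compactness of the Gauss image established there), the two-sided bound on $F$ coming from the barrier hypotheses via \Cref{ev of speed}, and — in the Lorentzian case — the inequality $\s_1\le F$ from convexity of $F$, to quantities at most \emph{linear} in $b^1_1$ with data-controlled coefficients; the $K_N\Phi'F^{ij}g_{ij}$ piece is cancelled by the $1/\vt'$ factor. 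Crucially, no convexity of $\phi$ in $s$ or $\nu$ enters here. (iii) The dangerous contribution: $-\phi_{xx}(x_{;i},x_{;k})g^{kj}$ together with $K_N(\Phi-\phi+\Phi'F)\de^j_i$ assemble, using $\Phi+\Phi'F=0$ and $\bar g(\xi,\xi)=1$, into
\eq{(b^1_1)^2\big(\bar D^2_{xx}\phi+K_N\phi\,\bar g\big)(\xi,\xi)\le-\e_0(b^1_1)^2,}
for some $\e_0>0$, by hypothesis \eqref{Flow-Main-1} and compactness of the region to which \Cref{grad} confines $(s,x,\nu)$.

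Collecting, $\cL w\le(\vt')^{-1}\big(-\e_0(b^1_1)^2+Cb^1_1+C\big)$ at the maximum, so $\cL w<0$ as soon as $b^1_1$ is large, contradicting $\cL w\ge0$; hence $b^1_1$, and therefore every $\ka_i\ge\ka_1=1/b^1_1$, is bounded below by a constant $c$ depending only on the data. The main obstacle is the bookkeeping in the middle paragraph: verifying that \emph{all} the $K_N$- and $\phi$-derivative terms produced by \Cref{Ev-b} reduce, at the maximum, either to the single good quadratic term governed by \eqref{Flow-Main-1} or to harmless lower-order terms — in particular that the terms formally quadratic in $h$ (such as $\vt^2\phi_{ss}r_{;k}r_{;m}h^k_ih^{mj}$ and $\phi_{\ti x\ti x}(x_{;k},x_{;l})h^k_ih^{lj}$) collapse to bounded expressions rather than growing in $b^1_1$ — together with the routine care over the multiplicity of the top eigenvalue. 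The case $N=\bbE^{n+1}$ ($K_N=0$) is the cleanest, requiring only $\bar D^2_{xx}\phi<0$; in the Lorentzian de Sitter case one uses that a $1$-homogeneous convex $F$ is automatically inverse concave, so that \eqref{invers concav} remains available despite \Cref{F1}(iv).
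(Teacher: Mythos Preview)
Your overall strategy---bounding the largest principal radius via the maximum principle and the evolution of $(b^i_j)$ from \Cref{Ev-b}---is the paper's approach (the paper uses the smooth trace $B=b^r_r$ rather than $\mu_{\max}$, but this is inessential). For $K_N\le 0$ your argument goes through after two minor repairs: (a) \Cref{ev of speed} together with the barrier gives only the \emph{one-sided} bound $F\le f$, not a two-sided one; this suffices because every $1/F$ appearing in the type-(ii) terms is controlled by $b^1_1/n$ via $F\ge n\ka_1$; (b) for $K_N\neq 0$ the condition $\n w=0$ does \emph{not} force $h_{11;k}=0$ but only $h_{11;k}=-\vt'_{;k}/(\vt'\,b^1_1)=O(1/b^1_1)$, so the $\phi_s$- and $\phi_{\ti x}$-gradient pieces survive but are harmless (at most linear in $b^1_1$).

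The genuine gap is in the $K_N=1$ cases. Your auxiliary function $w=\mu_{\max}/\vt'$ (equivalently $\log\mu_{\max}-\log\vt'$) cancels the bad term $K_N\Phi'F^{ij}g_{ij}\,\mu_{\max}$ \emph{exactly}, but $\cL\vt'$ also carries the piece
\eq{K_N(\phi-\Phi+\Phi'F)\ip{\ti x}{e_{n+2}}_\mu=K_N\br{\phi+\tfrac{2}{F}}\ip{\ti x}{e_{n+2}}_\mu,}
and inserting this into $-\mu_{\max}\,\cL\vt'/(\vt')^2$ produces a term of size $2\mu_{\max}/(F(\vt')^2)$. Since only $F\le f$ is available, $1/F$ can be as large as $\mu_{\max}/n$, so this is a contribution of order $\mu_{\max}^2$ with coefficient $\sim 2/(n(\vt')^2)$ depending only on $\Om$ and \emph{not} dominated by the $\e_0$ coming from \eqref{Flow-Main-1}. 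Hence $\cL w<0$ cannot be concluded for large $b^1_1$ when $\e_0$ is small. (For $K_N<0$ the term $K_N\Phi'F^{ij}g_{ij}\mu_{\max}$ already has the favourable sign, so one should simply take $w=\mu_{\max}$; dividing by $\vt'$ both destroys this good term and creates the same stray $2/F$ contribution.) The paper's remedy is to take $w=\log B-\log(\vt'-\de)$ with small $\de>0$: the identity $1+\al'\vt'=\de\al'<0$ means the cancellation is deliberately \emph{incomplete}, leaving a residual $\de\al'\Phi'F^{kl}g_{kl}\le -\de\abs{\al'}c_F/F^2$ (here $F^{kl}g_{kl}\ge c_F>0$ by concavity/uniform monotonicity), and this strictly negative $1/F^2$ term absorbs every stray $C/F$, including the one above. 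Replacing your $1/\vt'$ by $1/(\vt'-\de)$ closes the gap.
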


\pf{
Let $T^{*}>0$ be the maximal time of smooth existence (see \cite[Sec.~2.5, 2.6]{Gerhardt:/2006} for the existence of $T^{*}$) and $0<T_{\star}\leq T^{*}$ be the supremum of all times up to which the flow is strictly convex.
To prove the lemma, we show that \eqref{k1} holds up to $T_{\star}$ with a constant $c>0$ that only depends on the data of the problem. This then shows that $T_{\star}=T^{*}$ and concludes the proof of the lemma.

Due to our assumptions about the presence of barriers, by \Cref{grad} all derivatives of $\phi$ are uniformly bounded up to $T_{\star}$. Since $M_{t}$ is strictly convex for $t<T_{\star}$, by \Cref{ev of speed} $M_{t}$  is a lower barrier ($\s=1$) or an upper barrier ($\s=-1$) and hence
\eq{F\leq f\leq C.}
We use \Cref{Ev-b} to find the evolution of
\eq{B:=g_{rs}b^{rs}=b^{r}_{r}}
and use the bounds on $\phi$ and its derivatives and \eqref{invers concav} with
 \eq{\eta_{kl}=h_{kl;r}\ka_{r}^{-1}}
  to estimate
\eq{\mathcal{L}B&\leq-\fr{2}{F^{2}}F^{kp}b^{lq}h_{kl;i}h_{pq;j}b^{i}_{r}b^{rj}-\fr{1}{F^{2}}F^{kl,pq}h_{kl;i}h_{pq;j}b^{i}_{r}b^{rj}\\
				&\hp{=}+\fr{2}{F^{3}}F^{kl}F^{pq}h_{kl;i}h_{pq;j}b^{i}_{r}b^{rj}-\fr{\s}{F^{2}}F^{kl}h_{rk}h^{r}_{l}B\\
				&\hp{=}+n\s\br{\fr{2}{F}+\phi}+K_{N}\phi \abs{b}^{2}+\fr{K_{N}}{F^{2}}F^{kl}g_{kl}B\\
				&\hp{=}+\phi_{xx}(x_{;i},x_{;j})b^{i}_{r}b^{rj}+C(B+1)-\phi_{s}\vt B_{;k}{r_{;}}^{k}-\phi_{\ti x}(x_{;k}){B_{;}}^{k}		\\
				&\leq \sum_{r=1}^n\br{\phi_{xx}+K_N \phi \bar g}(x_{;r}\ka_{r}^{-1},x_{;r}\ka_{r}^{-1})-\fr{\s}{F^{2}}F^{kl}h_{rk}h^{r}_{l}B\\
				&\hp{=}+n\s\br{\fr{2}{F}+\phi}+\fr{K_{N}}{F^{2}}F^{kl}g_{kl}B+C(B+1)\\
				&\hp{=}-\phi_{s}\vt B_{;k}{r_{;}}^{k}-\phi_{\ti x}(x_{;k}){B_{;}}^{k}.}
For the case $ K_{N}\leq 0$ (which also implies $\s=1$) a bound on $B$ up to $T_{\star}$ follows easily: Due to
$\fr{1}{F}\leq B$, the strict inequality assumption \eqref{Flow-Main-1} on the Hessian of $\phi$ and that in view of \Cref{grad} the arguments of $\phi$ range in a compact set, a good second degree term dominates the right-hand side.

In case $K_{N}=1$, we use the concavity of $F$ and \cite[Lem.~2.2.19]{Gerhardt:/2006} when $N=\bbS^{n+1}$, and also the uniform monotonicity in case $N=\bbS^{n,1}$ to ensure that up to $T_{\star}$ there holds
\eq{\label{c_F} F^{ij}g_{ij}\geq c_F>0}
for some constant $c_{F}$.
For $\de$ sufficiently small define
\eq{w=\log B+\al(\vt'),~\mbox{where}~\al(\vt'):=-\log(\vt'-\de).}
Then $w$ satisfies
\eq{\cL w&\leq  -\e B-\fr{\s}{F^{2}}F^{kl}h_{rk}h^{r}_{l}+\fr{n\s}{B}\br{\fr{2}{F}+\phi}+C(1+B^{-1})\\
		&\hp{=}+\fr{1}{F^{2}}F^{kl}g_{kl}-\vt\phi_{s}r_{;k}{(\log B)_{;}}^{k}-\phi_{\ti x}(x_{;k}){(\log B)_{;}}^{k}\\
		&\hp{=}+\fr{1}{F^{2}}F^{kl}(\log B)_{;k}(\log B_{;l})+\al'\cL\vt'-\fr{\al''}{F^{2}}F^{kl}\vt'_{;k}\vt'_{;l}.}
With the help of
\eq{\al''=\al'^{2},\quad 1+\al'\vt'=-\fr{\de}{\vt'-\de}=\de\al',\label{alpha properties}}
at a maximum point we obtain
\eq{\cL w&\leq  -\e B-\fr{\s}{F^{2}}F^{kl}h_{rk}h^{r}_{l}+\fr{n\s}{B}\br{\fr{2}{F}+\phi}+C(1+B^{-1})\\
		&\hp{=}+\de\al'\fr{1}{F^{2}}F^{kl}g_{kl}+C\abs{\al'}\abs{\n \vt'}+C\abs{\al'}(1+F^{-1}).}
We may use the resulting good, strictly negative $1/F^{2}$ term (from equation \eqref{c_F}) to absorb $C/F$ up to a constant.
In case $\s=-1,$ due to convexity of $F$ we have
\eq{F^{kl}h_{rk}h^r_l\leq FH\leq F^2,} while in case $\s=1$ the term involving this expression is negative.
Thus $B$ is uniformly bounded.}

We finish the proof of \Cref{Flow-Main}. By the previous lemma, uniform curvature estimates follow from $F\leq f$ and $F_{\ast|\del\G_{+}}=0$. In fact, we have
\eq{C\geq \fr{F}{\ka_{1}}=F\br{1,\dots,\fr{\ka_{n}}{\ka_{1}}}=\fr{1}{F_{\ast}(\tfrac{\ka_{1}}{\ka_{n}},\dots,1)};}
hence, if $\ka_{n}\to\infty$, then the right-hand side would blow up as well.

To deduce $C^{2,\al}$-estimates by \cite{Krylov:/1987}, we need to work around the obstruction that the curvature function is not concave.
Since we are working in simply connected spaceforms, we distinguish three cases. If $K_{N}=0$, we use the Gauss map parametrization, under which the support function satisfies
\eq{\label{gauss map derv s}\dot{s}&=\ip{\dot x}{\nu}=(\phi-\Phi)=\br{\fr{1}{F(h^{j}_{i})}+\phi}=\br{F_{\ast}(\ti h^{j}_{i})+\phi}.}
Here $\wt \cW=(\ti h^{j}_{i})$ is the inverse of the Weingarten map; i.e., in terms of the round metric $\bar g$,
\eq{\wt \cW=(\hat\n^2 s)^{\sharp}+s\hat g.}
Since $F_{\ast}$ is concave, \eqref{gauss map derv s} satisfies the assumptions of the Krylov-Safonov theorem \cite{Krylov:/1987} and the higher order regularity estimates for the support function follow. Due to strict convexity, we also obtain higher order regularity estimates for the original flow. In the other cases, we can use the dual flow method to obtain regularity of support function as in \cite{Gerhardt:/2015,Wei:04/2019}.

From the $C^{\8}$-estimates and the monotonicity of the flow we obtain a unique smooth limit hypersurface.
The radial function $r$ of the flow hypersurfaces (cf., \cite[p.~98-99]{Gerhardt:/2006}) satisfies
\eq{\fr{\del r}{\del t}=\s \br{\phi+\fr{1}{F}}v.}
Integration and using that $\phi+F^{-1}$ is non-negative gives
\eq{\abs{r(t,x)-r(0,x)}= \int_{0}^{t}v\br{\fr{1}{F}+\phi}.}
Due to \Cref{grad},
\eq{\int_{0}^{\8}\left|\fr{1}{F}+\phi\right|\leq c\int_{0}^{\8}v\left(\fr{1}{F}+\phi\right)<\8.}
Therefore, in view of the monotonicity of $r$, the limit
\eq{\ti r(x):=\lim_{t\ra \8}r(t,x)}
exists and is smooth and it must satisfy \eqref{GMP}.

\subsection{Proofs of {\texorpdfstring{\Cref{thm:Lp-Mink}}{Thm.~3.7}} and {\texorpdfstring{\Cref{thm:Lp-CMP}}{Thm.~3.13}}}\label{pf:Lp-Mink}

In the following lemma, we give a crucial estimate that is needed for treating the wider class of curvature functions in \Cref{thm:Lp-CMP}, while for \Cref{thm:Lp-Mink} a crude bound suffices.

\begin{lemma}\label{Hessian}
Let
\eq{f\cn \bbR_{+}\x \ti N&\ra \bbR\\
				(s,\ti x)&\mt s^{1-q}\p(\ti x)}
 with $q(q-1)>0$, where $\p\in C^{\8}(\ti N)$ is extended as a degree zero function as in \Cref{sec:Ev}. Then we have
\eq{-\vt^{2} f_{ss}r_{;n}^{2}-&2\vt f_{s\ti x}(x_{;n}){r_{;n}}- f_{\ti x\ti x}(x_{;n},x_{;n})\\
&\leq-qs^{1-q}\p^{1-\fr 1q}\ti D^{2}(\p^{\fr 1q})(x_{;n},x_{;n}).}
\end{lemma}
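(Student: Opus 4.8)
The plan is a direct computation that exploits the product structure $f(s,\ti x)=s^{1-q}\p(\ti x)$ and finishes with a completed square. First I would record the relevant derivatives of $f$, namely
\[f_{ss}=q(q-1)s^{-q-1}\p,\qquad f_{s\ti x}=(1-q)s^{-q}D\p,\qquad f_{\ti x\ti x}(x_{;n},x_{;n})=s^{1-q}\,\ti D^{2}\p(x_{;n},x_{;n}),\]
where in the last identity I use, exactly as in the manipulations of \Cref{sec:Ev}, that on vectors tangent to $\ti N$ the ambient second derivative of the degree-zero extension $\p$ coincides with its intrinsic Hessian $\ti D^{2}\p$ (the radial derivative of $\p$ vanishes). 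Writing $t:=\vt r_{;n}$ and $\p_{;n}:=D\p(x_{;n})$ and pulling out the positive factor $s^{1-q}$, the left-hand side of the claimed inequality becomes
\[s^{1-q}\br{-q(q-1)s^{-2}\p\, t^{2}-2(1-q)s^{-1}\p_{;n}t-\ti D^{2}\p(x_{;n},x_{;n})}.\]

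Second, I would expand the right-hand side with the chain rule. Setting $\psi:=\p^{1/q}$, one has $\ti D^{2}\psi=\tfrac1q\p^{1/q-1}\ti D^{2}\p+\tfrac1q\br{\tfrac1q-1}\p^{1/q-2}\,D\p\otimes D\p$, hence
\[-q\,\p^{1-1/q}\,\ti D^{2}\psi=-\ti D^{2}\p+\fr{q-1}{q}\,\p^{-1}\,D\p\otimes D\p.\]
The decisive observation is that the terms $\ti D^{2}\p(x_{;n},x_{;n})$ on the two sides of the claim cancel, so — using $\p>0$ and, via \Cref{grad}, $s>0$ — the inequality is equivalent to
\[0\leq (q-1)\br{\fr1q\p^{-1}\p_{;n}^{2}+q\,s^{-2}\p\, t^{2}-2s^{-1}\p_{;n}t}.\]

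Finally I would complete the square inside the bracket. If $q>0$, hence $q>1$ by the hypothesis $q(q-1)>0$, the bracket equals $\br{q^{-1/2}\p^{-1/2}\p_{;n}-q^{1/2}s^{-1}\p^{1/2}t}^{2}\geq 0$ and $q-1>0$; if $q<0$ the bracket equals $-\br{\abs{q}^{-1/2}\p^{-1/2}\p_{;n}+\abs{q}^{1/2}s^{-1}\p^{1/2}t}^{2}\leq 0$ and $q-1<0$. In either case the product is nonnegative, which proves the lemma.

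I do not expect a genuine obstacle here: once the derivatives of $f$ are written down, everything is elementary algebra and a single square. The only point that demands a little care is the identity $f_{\ti x\ti x}(x_{;n},x_{;n})=s^{1-q}\ti D^{2}\p(x_{;n},x_{;n})$, i.e.\ the agreement of the ambient and the intrinsic Hessians of the $0$-homogeneous extension on tangent directions, but this is exactly the convention already in force throughout \Cref{sec:Ev}, so it costs nothing extra.
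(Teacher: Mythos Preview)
Your proof is correct and is essentially the same as the paper's: both compute the three partial derivatives of $f$, reduce the inequality (after cancelling the $\ti D^{2}\p$ terms) to the nonnegativity of a quadratic form in $\vt r_{;n}$ and $\p^{-1}\p_{;n}$, and then verify this nonnegativity. The only cosmetic difference is that the paper phrases the final step as Young's inequality with the optimal weight $\zeta=\sgn(q-1)\,q\p/s$, whereas you complete the square directly; these are the same manipulation. Your remark on identifying the ambient and intrinsic Hessians of the $0$-homogeneous extension on tangent vectors is exactly what the paper invokes as well.
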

\pf{ Let us put
\[Q=\vt^{2} f_{ss}r_{;n}^{2}+2\vt f_{s\ti x}(x_{;n}){r_{;n}}+ f_{\ti x\ti x}(x_{;n},x_{;n}).\]
We calculate
\eq{f_{ss}=q(q-1)s^{-(q+1)}\p,\quad f_{s\ti x}=(1-q)s^{-q}\p_{\ti x},\quad f_{\ti x\ti x}=s^{1-q}\p_{\ti x\ti x.}}
Hence choosing
$\zeta=\sgn(q-1)\fr{q\p}{s},$
and using Young's inequality with $\epsilon = \zeta$ we get
\eq{Q&=-2(q-1)s^{-q}\p_{\ti x}(x_{;n})\vt r_{;n}\\
&\hp{=}+q(q-1)s^{-(q+1)}\p\vt^{2}r_{;n}^{2}+s^{1-q}\p_{\ti x\ti x}(x_{;n},x_{;n})\\
&\geq-\zeta\abs{q-1}s^{-q}\vt^{2}r_{;n}^{2}-\fr{\abs{q-1}}{\zeta}s^{-q}\p_{\ti x}(x_{;n})^{2}\\
&\hp{=}+q(q-1)s^{-(q+1)}\p\vt^{2}r_{;n}^{2}+s^{1-q}\p_{\ti x\ti x}(x_{;n},x_{;n})\\
&=qs^{1-q}\p^{1-\fr 1q}\br{\p^{\fr 1q}}_{\ti x\ti x}(x_{;n},x_{;n})\\
&=qs^{1-q}\p^{1-\fr 1q}\ti D^{2}(\p^{\fr 1q})(x_{;n},x_{;n}).}
Here we employed the zero homogeneous extension of $\p$ to infer that the full Hessian of $\p$ equals the Hessian on $\ti N$.
}
To prove \Cref{thm:Lp-Mink,thm:Lp-CMP}, it is favorable to use a contracting type flow, hence we choose
\eq{\Phi(F)=F,\quad \phi=f=s^{1-q}\p(\ti x).}
Due to the scaling properties of $F$ and $s$, and the range of $q$, we have spherical barriers. We start from a lower barrier.
Let us state the simplified version of the evolution equation of the second fundamental form. Using \Cref{Ev-h} we find

\eq{\label{pf:Lp-Mink-1}\mathcal{L}h_i^j&=F^{kl}h_{kr}h^{r}_{l}h^{j}_{i}- f h^{j}_{k}h^{k}_{i}+F^{kl,rs}h_{kl;i}{h_{rs;}}^{j}-\vt^{2} f_{ss} h_{il}{r_{;}}^{l} h_{k}^{j}{r_{;}}^{k}\\
		&\hp{=}-\vt f_{s\ti x}(x_{;l})h_{ik}{r_{;}}^{k}h^{lj}+ f_{s} h_{i}^{k}h_{k}^{j}s-  f_{s} h_{i}^{j}- f_{s}\vt {h_{ik;}}^{j}{r_{;}}^{k}\\
		&\hp{=}-\vt f_{\ti xs}(x_{;l})h^{l}_{i} h_{k}^{j}{r_{;}}^{k}- f_{\ti x\ti x}(x_{;k},x_{;l})h^{k}_{i}h^{lj}- f_{\ti x}(x_{;k})h^{kj}_{i;}.}
To obtain a lower bound on the principal curvatures in both theorems, we proceed as in the proof of \Cref{Flow-Main}. Again let $T_{\star}\leq T^*$ be the supremum of all times up to which the flow is strictly convex. Suppose $T_{\star}<T^*$ and define
\eq{B=g_{rs}b^{rs}=b^{r}_{r}}
as before. We will prove a uniform upper bound on $B$ up to $T_{\star}$, which then contradicts the definition of $T_{\star}$, unless $T_{\star}=T^*$. This will imply preservation of strict convexity and the existence of a lower bound
\eq{\ka_1\geq \e>0,}
where $\e$ only depends on the data of the problem.

Now we estimate $B$ using \Cref{Ev-b}. Up to $T_{\star}$ we have
\eq{\mathcal{L}B&\leq -F^{kl}h_{kr}h^{r}_{l}B+f_{s}B+C-f_{s}\vt B_{;k}{r_{;}}^{k}-f_{\ti x}(x_{;k}){B_{;}}^{k}\\
					&\leq (1-q)s^{-q}\p(\ti x)B+C-f_{s}\vt B_{;k}{r_{;}}^{k}-f_{\ti x}(x_{;k}){B_{;}}^{k},}
where we have used \cref{grad} and \eqref{invers concav}. Since $q>2$, the first degree $B$-term dominates the right-hand side and hence $B$ is uniformly bounded up to $T_{\star}$.

Since we started from a lower barrier, $F\leq C.$ Hence in the case of \Cref{thm:Lp-Mink}, using $F_{\ast|\del\G_{+}}=0$ we can obtain uniform curvature bounds, and the proof can be completed using the Gauss map parametrization.

Regarding \Cref{thm:Lp-CMP}, we need further arguments to obtain upper curvature bounds. We work in a local coordinate system, such that at a maximum point of $\ka_{n}$ in space-time
\eq{g_{ij}=\de_{ij},\quad h_{ij}=\ka_{i}\de_{ij}.}
At such a point, we obtain after dividing \eqref{pf:Lp-Mink-1} by $\ka_{n}^{2}$,
\eq{0&\leq C\ka_{n}^{-1}+\ka_{n}^{-1}F^{kl}h_{rk}h^{r}_{l}-qf-qf\p^{-\fr 1q}\ti D^2(\p^{\fr 1q})(x_{;n},x_{;n}),
}
where we used \Cref{Hessian}.
Regarding case (i), we use that $F$ is of class $\La_{\e}$ and the bound on $F$ to conclude that the strictly negative term on the right-hand side dominates and we obtain a contradiction for large $\ka_{n}$. In case (ii), using the $1$-homogeneity of $F$ we estimate
\eq{F^{kl}h_{rk}h^{r}_{l}\leq F\ka_{n}\leq f\ka_{n}}
and again the condition on the spherical Hessian gives a bound on $\ka_{n}$.
With the uniform curvature estimates at hand, the proof can now be completed as in \Cref{pf:Flow-Main} using the Gauss map parametrization.

\subsection{Proof of {\texorpdfstring{\Cref{thm:Lp-CMP-KN}}{Theorem 3.16}}}

Recall that we have
\eq{\s=-1,\quad K_{N}= 1,\quad f=cs^{1-q}\p(\nu),}
where, as in the proof of \Cref{KN=1}, we choose a constant $c$ as follows to ensure the existence of barriers. We have
\eq{\bar\ka_{i}=\fr{\vt'}{\vt}=\tanh r.}
Let $a>0$. Using that $\p$ is bounded we can pick $c$ such that
\eq{0<c<\inf_{\nu\in \bbH^{n+1}}\fr{n\sinh a}{\p(\nu)\cosh^{2-q}a}.}
This yields
\eq{(F-f)_{|r=a}=n\tanh a-c\p\cosh^{1-q}a>0.}
Moreover, we have
\eq{\limsup_{r\ra \8}\br{n\tanh r-c\p\cosh^{1-q}r}<0.}
Therefore, defining $\Om=(a,b)\x\bbS^{n}$ with $a$ as above and $b$ sufficiently large, we obtain barriers for this curvature problem in $\bbS^{n,1}$. Then we start the flow from a lower barrier, where $\Phi=F$. Hence
\eq{F\geq f\geq  c.\label{lower F bound}}
The evolution of the second fundamental form in \Cref{Ev-h} becomes
\eq{\mathcal{L}h_i^j&=-F^{kl}h_{kr}h^{r}_{l}h^{j}_{i}+f h^{j}_{k}h^{k}_{i}+(2F-f)\de^{j}_{i}-F^{kl}g_{kl}h^{j}_{i}+F^{kl,rs}h_{kl;i}{h_{rs;}}^{j}\\
				&\hp{=}-\vt^{2}f_{ss} h_{il}{r_{;}}^{l} h_{k}^{j}{r_{;}}^{k}-\vt f_{s\ti x}(x_{;l})h_{ik}{r_{;}}^{k}h^{lj}-f_{s} h_{i}^{k}h_{k}^{j}s+ f_{s} \vt'h_{i}^{j}\\
				&\hp{=}-f_{s}\vt {h_{ik;}}^{j}{r_{;}}^{k}-\vt f_{\ti xs}(x_{;l})h^{l}_{i} h_{k}^{j}{r_{;}}^{k}-f_{\ti x\ti x}(x_{;k},x_{;l})h^{k}_{i}h^{lj}\\
		&\hp{=}-f_{\ti x}(x_{;k})h^{kj}_{i;}+f_{\ti x}(x)h_{i}^{j}.}
Let $T_{\star}\leq T^{*}$ be as in the previous proofs. From the concavity of $F$, \cref{grad}, \Cref{Hessian} and \eqref{desitter} it follows that up to $T_{\star}$ and at a maximum point of $\ka_{n}$ with $\ka_{n}\geq 1$ we have
\eq{0&\leq -\ka_{n}^{-1}F^{kl}h_{kr}h^{r}_{l}+ qf+C\ka_{n}^{-1}-qf\p^{-\fr 1q}\ti D^{2}(\p^{\fr 1q})(x_{;n},x_{;n})\\
	&\leq -\ka_{n}^{-1}F^{kl}h_{kr}h^{r}_{l}+ \e qf+C\ka_{n}^{-1},}
where we used $q<0$ and that for some $\e>0,$
\eq{\p^{-\fr 1q}\ti D^{2} (\p^{\fr 1q})<(1-\e)\ti g.}

Therefore, in view of $q<0$ and the uniform positivity of $f$, principal curvatures are uniformly bounded above up to $T_{\star}$.

Now we prove a uniform lower curvature bound up to $T_{\star}$. Define $B$ as above and again use \Cref{Ev-b}. Using $F\leq n\ka_{n}\leq C$, up to $T_{\star}$ there holds
\eq{\mathcal{L}B&\leq C(B+1)-F\abs{b}^{2}+F^{kl}g_{kl}B-f_{s}\vt B_{;k}{r_{;}}^{k}-f_{\ti x}(x_{;k}){B_{;}}^{k}.  }
We will deal with the term $F^{kl}g_{kl}$ by using the radial function.
Define
\eq{w=\log B+\al(\vt'),}
where $\al$ is as in the proof of \Cref{Flow-Main}.
Then at any maximum point with $B\geq 1$ we have
\eq{\mathcal{L}w&\leq C(1+\abs{\al'})-\fr{F}{n}B+\br{1+\al'\vt'}F^{kl}g_{kl}\\
				&\hp{=}+\br{\al'^{2}-\al''}F^{kl}\vt'_{;k}\vt'_{;l}.}
In view of \eqref{alpha properties} and \eqref{lower F bound}, the $B$-term dominates the right-hand side and the bound on $B$ follows. Hence $B$ is bounded up to $T_{\star}$, which implies $T_{\star}=T^{*}$ and in turn we obtain uniform upper and lower curvatures bounds up to $T^{*}$. Now the proof can be completed as in the previous theorems.
\section*{Acknowledgment}
PB was supported by the ARC within the research grant ``Analysis of fully non-linear geometric problems and differential equations", number DE180100110. MI was supported by a Jerrold E. Marsden postdoctoral fellowship from the Fields Institute. JS was supported by the ``Deutsche Forschungsgemeinschaft" (DFG, German research foundation) within the research scholarship ``Quermassintegral preserving local curvature flows", grant number SCHE 1879/3-1.
\bibliographystyle{amsalpha-nobysame}
\bibliography{Bibliography.bib}

\vspace{10mm}

\textsc{Department of Mathematics, Macquarie University,\\ NSW 2109, Australia, }\email{\href{mailto:paul.bryan@mq.edu.au}{paul.bryan@mq.edu.au}}

\vspace{2mm}

\textsc{Institut f\"{u}r Diskrete Mathematik und Geometrie,\\ Technische Universit\"{a}t Wien,
Wiedner Hauptstr 8-10,\\ 1040 Wien, Austria, }\email{\href{mailto:mohammad.ivaki@tuwien.ac.at}{mohammad.ivaki@tuwien.ac.at}}

\vspace{2mm}

\textsc{Cardiff University, School of Mathematics, Senghennydd Road, Cardiff CF24 4AG, Wales, }\email{\href{mailto:scheuerj@cardiff.ac.uk}{scheuerj@cardiff.ac.uk}}

\end{document}